\def\Q{\mathbb{Q}}
\def\Z{\mathbb{Z}}
\def\C{\mathbb{C}}
\def\F{\mathbb{F}}
\newcommand{\cB}{{\mathcal B}}
\newcommand{\cE}{{\mathcal E}}
\newcommand{\cF}{{\mathcal F}}
\newcommand{\cG}{{\mathcal G}}
\newcommand{\cM}{{\mathcal M}}
\newcommand{\cO}{{\mathcal O}}
\newcommand{\cS}{{\mathcal S}}
\renewcommand{\mod}{\bmod}
\newcommand{\bcS}{\overline{\mathcal S}}
\newcommand{\p}{{\mathfrak p}}
\newcommand{\frP}{{\mathfrak P}}
\newcommand{\frI}{{\mathfrak I}}
\newcommand{\frA}{{\mathfrak A}}
\newcommand{\set}[1]{\left\lbrace#1\right\rbrace }
\DeclareMathOperator{\Tr}{Tr}
\DeclareMathOperator{\Hom}{Hom}
\DeclareMathOperator{\Aut}{Aut}
\DeclareMathOperator{\ICM}{ICM}
\DeclareMathOperator{\Pic}{Pic}
\DeclareMathOperator{\End}{End}
\DeclareMathOperator{\GL}{GL}
\DeclareMathOperator{\rank}{rank}
\DeclareMathOperator{\Gal}{Gal}
\DeclareMathOperator{\id}{id}
\DeclareMathOperator{\Fr}{Fr}
\DeclareMathOperator{\Tors}{Tors}
\newcommand{\BassMod}[2]{{\cB}({#1,#2})}
\newcommand{\vphi}{{\varphi}}
\DeclareMathOperator{\AV}{AV}
\newcommand{\AVord}[1]{\AV^{\text{ord}}({#1})}
\newcommand{\Modord}[1]{\cM^{\text{ord}}({#1})}
\newcommand{\Mod}[1]{\cM({#1})}
\newcommand{\Ford}{\cF^{\text{ord}}}
\renewcommand{\bar}{\overline}
\newtheorem{df}{Definition}[section]
\newtheorem{prop}[df]{Proposition}
\newtheorem{thm}[df]{Theorem}
\newtheorem{cor}[df]{Corollary}
\newtheorem{remark}[df]{Remark}
\newtheorem{example}[df]{Example}
\title{Computing base extensions of ordinary abelian varieties over finite fields}
\author{Stefano Marseglia}
\date{\vspace{-4ex}}
\begin{document}
\maketitle

\begin{abstract}
We study base field extensions of ordinary abelian varieties defined over finite fields using the module theoretic description introduced by Deligne.
As applications we give algorithms to determine the minimal field of definition of such a variety and to determine whether two such varieties are twists.
\end{abstract}

\section{Introduction}
Abelian varieties over $\C$ can easily be described in terms of tori. Indeed for a complex abelian variety $A$ of dimension $d$ we have an isomorphism $A(\C)\simeq\C^d/L$, where $L$ is a $\Z$-lattice of rank $2d$.
This association does not hold on the whole category of abelian varieties when we move to the wilder realms of positive characteristic.
The reason is that there are objects such as the supersingular elliptic curves with quaternionic endomorphism algebra, which does not admit a $2$-dimensional representation over $\Q$.

Nevertheless, if we restrict our attention to the ordinary abelian varieties (that is, having $p$-torsion of maximal rank) over a finite field $\F_q$ of characteristic $p$, then we can still mimic the result from the complex world.
More precisely, Deligne in \cite{Del69} proved that the category $\AVord{q}$ of ordinary abelian varieties over $\F_q$ is equivalent to the category $\Modord{q}$ of $\Z$-modules with a ``Frobenius endomorphism".
We recall the precise statement of Deligne's theorem in Section \ref{sec:equivalences}.

Fix an isogeny classes of ordinary abelian varieties over $\F_q$, which by Honda-Tate theory 
is uniquely determined by a $q$-Weil polynomial $h$.
The subcategory $\Mod{h}$ of modules in $\Modord{q}$ whose characteristic polynomial of Frobenius is $h$, under some assumptions on $h$, becomes easy to describe in terms of categories of modules and fractional ideals over orders in product of number fields.
Such descriptions are given in \cite{MarAbVar18} and \cite{MarBassPow} and are recalled in Theorem \ref{thm:bassdecom}.

In this paper we use these module-theoretic descriptions and the related computational tools  to study the functor associating to an abelian variety $A$ in $\AVord{q}$ its base extension $A\otimes_{\F_q}\F_{q^r}$ in $\AVord{q^r}$, for a fixed positive integer $r$.
In particular, if $A$ is a simple abelian variety in $\AVord{q}$ with characteristic polynomial of Frobenius $h$, then the characteristic polynomial of $A\otimes_{\F_q}\F_{q^r}$ will be of the form $g^s$ for some irreducible $q^r$-Weil polynomial and positive integer $s$, as we recall in  Section \ref{sec:uptoisogeny}.
See also Remark \ref{rmk:squarefree} for a generalization to non-simple abelian varieties.

Section \ref{sec:uptoiso} contains the main results of the paper, namely Theorem \ref{thm:functors} and Corollary \ref{cor:functors} which describe the functor $- \otimes_{\F_q}\F_{q^r}:\AV(h)\to \AV(g^s)$ in terms of a functor $\cE_2$ defined on the category of modules.
The definition of $\cE_2$ is well suited for computations on the isomorphism level, as we explain in detail in Section \ref{sec:computations}. 
In particular it allows us to explicitly compute twists of abelian varieties and their (minimal) fields of definition.
These two applications are discussed in Sections \ref{sec:twists}, \ref{sec:gal_coh} and \ref{sec:field_def}.
The algorithms developed, which are available on the webpage of the author, allow us to compute explicit examples, which we include in the various sections of the paper.
In particular see Examples \ref{ex:ext}, \ref{ex:Bass}, \ref{ex:notBass}, \ref{ex:coh}, \ref{ex:ec_extension}, \ref{ex:ec_extension2}, \ref{ex:Bass2} and \ref{ex:notBass2}.

It is worth mentioning that there are other descriptions of subcategories of the category of abelian varieties over finite fields in terms of modules with extra structure other than the one of Deligne.
More precisely \cite{CentelegheStix15} deals only with abelian varieties over prime fields $\F_p$, while  in \cite[Appendix]{Lauter02}, \cite{Kani11} and \cite{JKPRSBT18} discuss functors on categories of abelian varieties isogenous to powers of elliptic curves.
Moreover, in \cite{Vogt19} the author studies the behaviour of the functor introduced in \cite{JKPRSBT18} in relation to Galois field extensions.
An equivalence of categories for almost ordinary simple abelian varieties over finite fields of odd characteristic similar to one of Deligne has been described in \cite{OswalShankarEarlyView}.
We chose to work with Deligne’s equivalence because it allows us to deduce results also about powers of abelian varieties of dimension greater than $1$.

\subsection*{Acknowledgements}
I am grateful to Jonas Bergstr\"om and Valentijn Karemaker for their suggestions. 
I thank Christophe Ritzenthaler and Rachel Newton for comments on a preliminary version of the paper, which was included in my PhD thesis written at Stockholm University.

\section{Preliminaries}
\label{sec:equivalences}
Let $\AVord{q}$ be the category of ordinary abelian varieties over $\F_q$.
Consider the category $\Modord{q}$ consisting of pairs $(T,F)$ where $T$ is a finitely generated free $\Z$-module and $F$ is an endomorphism of $T$ such that
\begin{itemize}
   \item $F\otimes \Q$ is semisimple with eigenvalues of complex absolute value $\sqrt{q}$,
   \item the characteristic polynomial $h$ of $F$ is ordinary, that is, exactly half of the roots of $h$ are $p$-adic units, and
   \item there exists an endomorphism $V$ of $T$ such that $FV=q$.
\end{itemize}

\begin{thm}{\cite[Th\'eor\`eme]{Del69}}
\label{thm:deligne}
   There is an equivalence of categories
   \[ \Ford: \AVord{q} \to \Modord{q}. \]
   If $\Ford(A)=(T,F)$ then $\rank_\Z(T)=2\dim(A)$ and $F$ corresponds to the Frobenius endomorphism of $A$.
\end{thm}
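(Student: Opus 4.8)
The plan is to build the functor $\Ford$ from the Serre--Tate canonical lift and then to show it is an equivalence using Tate's theorems on homomorphisms of abelian varieties over finite fields together with Honda--Tate theory.

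\emph{Constructing the functor.} First I would fix once and for all a ring embedding $\epsilon\colon W(\bar\F_q)\hookrightarrow\C$, with $W$ the Witt vectors. Given $A\in\AVord q$, take its canonical lift $\mathcal A$ over $W(\F_q)$; the defining property of the canonical lift is that reduction $\End(\mathcal A)\to\End(A)$ is an isomorphism, so the $q$-Frobenius and the Verschiebung of $A$ lift to endomorphisms $\tilde\pi,\tilde v$ of $\mathcal A$. I set $T:=H_1\bigl(\mathcal A\otimes_{W(\F_q),\epsilon}\C,\Z\bigr)$, a free $\Z$-module of rank $2\dim A$, and let $F,V$ be the endomorphisms induced by $\tilde\pi,\tilde v$. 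The characteristic polynomial of $F$ on $T$ is the characteristic polynomial of Frobenius of $A$, a $q$-Weil polynomial, which yields semisimplicity of $F\otimes\Q$ and absolute value $\sqrt q$ of the eigenvalues; ordinariness of $h$ is ordinariness of $A$; and $\tilde\pi\tilde v=[q]$ gives $FV=q$. Functoriality is immediate from functoriality of canonical lifting and of $H_1(-,\Z)$; these routine verifications I would not belabour.

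\emph{Full faithfulness.} For $A,B\in\AVord q$ I would use the chain
\[
\Hom_{\F_q}(A,B)\;\xrightarrow{\ \sim\ }\;\Hom_{W(\F_q)}(\mathcal A,\mathcal B)\;\hookrightarrow\;\Hom_\C(\mathcal A_\C,\mathcal B_\C)\subseteq\Hom_\Z(T(A),T(B)),
\]
which identifies $\Hom_{\F_q}(A,B)$ with a subgroup of the $F$-equivariant maps $T(A)\to T(B)$; the task is to show it is the whole group. Away from $p$ this is Tate's theorem, $\Hom_{\F_q}(A,B)\otimes\Z_\ell\xrightarrow{\sim}\Hom_{\Z_\ell}(T_\ell A,T_\ell B)^{F}$, together with an identification $T(A)\otimes\Z_\ell\cong T_\ell A$ of $F$-modules. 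At $p$ I would use that the $p$-divisible group of an ordinary abelian variety is an extension of an \'etale one by a multiplicative one, so that its (covariant) Dieudonn\'e module, and hence $T(A)\otimes\Z_p$, splits according to whether the eigenvalues of $F$ are $p$-adic units; the compatible $p$-adic and complex places furnished by $\epsilon$ then force this splitting to match the Hodge decomposition of $T(A)\otimes\C$, so that $F$-equivariance already implies compatibility with the Hodge structures, hence descends to an honest homomorphism of abelian varieties. Comparing finitely generated $\Z$-modules with matching $\ell$-adic completions for every $\ell$ finally upgrades the inclusion to an equality.

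\emph{Essential surjectivity, and the main obstacle.} Given $(T,F)\in\Modord q$, its characteristic polynomial $h$ is an ordinary $q$-Weil polynomial, so Honda--Tate provides an abelian variety over $\F_q$ with Frobenius polynomial $h$; by the previous step it is ordinary and its associated module is $(T,F)\otimes\Q$. Since $T$ is an $F$- and $V$-stable lattice in that $\Q$-vector space, I would transport the corresponding quasi-isogeny through the fully faithful, isogeny-preserving functor $\Ford$ to get an actual abelian variety $A$ with $\Ford(A)\cong (T,F)$. The hard part throughout is the prime $p$: identifying $T(A)\otimes\Z_p$ with the Dieudonn\'e module of the $p$-divisible group compatibly with Frobenius \emph{and} with the complex Hodge structure, so that the ordinary splitting of the $p$-divisible group corresponds to the splitting of $F\otimes\Z_p$ into its unit and topologically nilpotent parts — equivalently, so that in the ordinary case the Hodge filtration on $T(A)\otimes\C$ is pinned down by $F$ once $\epsilon$ is chosen. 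This is exactly what turns ``$F$-equivariant map of lattices'' into ``homomorphism of abelian varieties'', and it is where Serre--Tate deformation theory and the comparison with crystalline cohomology genuinely enter.
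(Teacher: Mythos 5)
The paper does not prove this theorem: it is quoted from Deligne with a bare citation to \cite{Del69} and used as a black box, so there is no internal proof against which to compare. Your sketch is a faithful outline of Deligne's own argument, which is exactly what the citation points to: choose $\epsilon\colon W(\bar\F_q)\hookrightarrow\C$, pass to the Serre--Tate canonical lift $\mathcal A$, set $T:=H_1(\mathcal A_\C,\Z)$ with $F,V$ the lifts of Frobenius and Verschiebung supplied by the rigidity of the canonical lift, prove full faithfulness by Tate's isogeny theorems checked prime by prime, and deduce essential surjectivity from Honda--Tate together with moving an $F$-stable lattice along isogenies through the already fully faithful functor. You also correctly locate the technical heart at $p$: one must verify that, under $\epsilon$, the Hodge filtration on $T(A)\otimes\C$ is exactly the unit-root decomposition of $F$, which follows from the ordinary splitting of $A[p^\infty]$ and the characterizing property of the canonical lift; only then does $F$-equivariance of a lattice map force compatibility with the complex structures and allow descent to a morphism of the abelian schemes. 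Two small points worth making explicit in a complete write-up, neither of which is a gap: the identification $T(A)\otimes\Z_\ell\cong T_\ell A$ for $\ell\neq p$ must be made $F$-equivariantly via the comparison between Betti and $\ell$-adic homology of the lift; and in the essential-surjectivity step you use semisimplicity of $F\otimes\Q$ to know that $(T,F)\otimes\Q$ is determined as a $\Q[F]$-module by $h$ alone, so that the Honda--Tate variety really has matching rational structure.
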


Let $A$ be an abelian variety over $\F_q$. 
Denote by $h_A$ the characteristic polynomial of Frobenius of $A$.
Recall that $h_A$ is a $q$-Weil polynomial, that is, a monic polynomial of even degree with integer coefficients  and with complex roots of absolute value equal to $\sqrt{q}$. 
By Honda-Tate theory, the polynomial $h_A$ uniquely determines the isogeny class of $A$, in the sense that, for an abelian variety $B$ over $\F_q$,
\[ A\sim_{\F_q} B \Longleftrightarrow h_A=h_B, \]
where $h_B$ is the characteristic polynomial of the Frobenius of $B$, see \cite{Tate66}.
Moreover, by \cite{Honda68} and \cite{Tate71}, one can use $q$-Weil polynomials to list all isogeny classes of abelian varieties over $\F_q$ of given dimension.

Let $h$ be the characteristic polynomial of Frobenius of an ordinary abelian variety over $\F_q$ and let $\AV(h)$ be the full subcategory of $\AVord{q}$ consisting of abelian varieties in the isogeny class determined by $h$.
Denote by $\cM(h)$ the image of $\AV(h)$ under $\Ford$.

For a squarefree $q$-Weil polynomial $g\in \Z[x]$ we put $K_g=\Q[x]/(g)$ and $\alpha = x\mod g$.
Let $R_g$ be the order $\Z[\alpha,q/\alpha]$ in $K_g$ and denote by $\BassMod{g}{s}$ the category of torsion free $R_g$-modules $M$ of rank $s$, that is, such that $M\otimes K_g\simeq K_g^s$.
In what follows we will always think of such a module as embedded in $K_g^s$.
Note that the objects of the category $\BassMod{g}{1}$ are just fractional $R_g$-ideals.

\begin{thm}{\cite[Theorem 4.1]{MarBassPow}}
\label{thm:powers}
   Assume that $h=g^s$ for some squarefree polynomial $g$. There is an equivalence of categories
   \[ \cG_h:\cM(h) \to \BassMod{g}{s}. \]   
\end{thm}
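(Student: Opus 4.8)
The plan is to build $\cG_h$ out of the extra structure carried by a Deligne module. Given $(T,F)\in\cM(h)$ with characteristic polynomial $h=g^s$, let $V$ be the endomorphism of $T$ with $FV=q$ provided by the third Deligne axiom (on $T\otimes\Q$ one has $V=qF^{-1}$, since $h(0)=g(0)^s\neq 0$), so that $T$ becomes a module over $\Z[F,V]\subseteq\End_\Z(T)$. The first step is to identify this subring with $R_g$. I would do this after tensoring with $\Q$: since $F\otimes\Q$ is semisimple with characteristic polynomial $g^s$ and $g$ is squarefree, the minimal polynomial of $F\otimes\Q$ is the radical $g$ of $g^s$, so $\Q[F]\cong\Q[x]/(g)=K_g$ via $F\mapsto\alpha$; as $V\mapsto q/\alpha$, the subring $\Z[F,V]$ corresponds exactly to $\Z[\alpha,q/\alpha]=R_g$, and $T$ thereby becomes an $R_g$-module (the action being faithful because a nonzero element of $R_g\subseteq K_g$ acts injectively on the nonzero $K_g$-module $T\otimes\Q$).

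Next I would verify $T\in\BassMod{g}{s}$. Torsion-freeness is automatic because $T$ is $\Z$-free. For the rank, write $g=\prod_i g_i$ with the $g_i$ distinct and irreducible, so $K_g=\prod_i K_i$ with $K_i=\Q[x]/(g_i)$, and correspondingly $T\otimes\Q=\prod_i W_i$ with $W_i$ a $K_i$-vector space. Comparing characteristic polynomials of $F$ gives $\prod_i g_i^{\dim_{K_i}W_i}=h=\prod_i g_i^{s}$, hence $\dim_{K_i}W_i=s$ for every $i$ and $T\otimes\Q\cong K_g^s$; thus $T$ has $R_g$-rank $s$. Set $\cG_h(T,F)=T$.

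On morphisms, a map $(T,F)\to(T',F')$ in $\cM(h)$ is by definition a $\Z$-linear $\vphi\colon T\to T'$ with $\vphi F=F'\vphi$; tensoring with $\Q$ shows $\vphi$ also intertwines $V=qF^{-1}$ with $V'$, hence commutes with all of $R_g$, so $\vphi$ is $R_g$-linear — and conversely any $R_g$-linear map is such a morphism. This yields a natural identification $\Hom_{\cM(h)}\big((T,F),(T',F')\big)=\Hom_{R_g}(T,T')$, so $\cG_h$ is fully faithful. For essential surjectivity, given $M\in\BassMod{g}{s}$ I would put $T=M$ as a $\Z$-module (finitely generated and free, since $M$ is $\Z$-torsion free with $M\otimes\Q\cong K_g^s$ finite-dimensional over $\Q$), with $F$ and $V$ the multiplications by $\alpha$ and $q/\alpha$. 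Then $FV=q$, $F\otimes\Q$ is semisimple with eigenvalues the roots of $g$ (Weil numbers of absolute value $\sqrt q$), and the characteristic polynomial of $F$ is $g^s=h$, which is ordinary by hypothesis; hence $(T,F)\in\Modord{q}$ with characteristic polynomial $h$, and since $\Ford$ is an equivalence onto $\Modord{q}$ this means $(T,F)\in\cM(h)$, with $\cG_h(T,F)=M$.

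The step I expect to be the real obstacle is the first one: showing that $\Z[F,V]$ is precisely the order $R_g$ and that $T\otimes\Q$ is free of rank $s$ over $K_g$. This is exactly where squarefreeness of $g$ enters — it forces the minimal polynomial of $F$ to coincide with the radical $g$ of $h$, which in turn lets the component-wise dimensions of $T\otimes\Q$ be read off from the characteristic polynomial. Without this hypothesis $\Z[F,V]$ need not be an order in an étale $\Q$-algebra, and the target of the equivalence has to be described by other means, as in the more general results cited in the paper.
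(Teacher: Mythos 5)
The paper itself does not prove this theorem: it cites \cite[Theorem 4.1]{MarBassPow} and only records the construction of $\cG_h$ (the natural identification $R_g \cong \Z[F,V]$ via $\alpha\mapsto F$ and the resulting $R_g$-module structure on $T$). Your construction coincides exactly with that description, and your proof of the equivalence is correct and is a sound reconstruction of the cited result. The argument hinges, as you say, on semisimplicity of $F\otimes\Q$ forcing the minimal polynomial to be the radical $g$ of $g^s$, which gives both $\Q[F]\cong K_g$ and, via the componentwise count over $K_g=\prod_i K_i$, that $T\otimes\Q\cong K_g^s$; full faithfulness follows since commuting with $F$ already entails commuting with $V=qF^{-1}$, hence with all of $R_g$; and essential surjectivity is the routine verification that $(M,\cdot\alpha)$ satisfies the Deligne axioms. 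Two small points worth flagging. First, in the essential surjectivity step, concluding that $M$ is a finitely generated free $\Z$-module from $\Z$-torsion-freeness and finite-dimensionality of $M\otimes\Q$ alone is not valid (consider $\Z[1/2]$); one must also use that the objects of $\BassMod{g}{s}$ are finitely generated $R_g$-modules, hence finitely generated over $\Z$. This is implicit in the paper's definition of $\BassMod{g}{s}$ (and explicit in \cite{MarBassPow}), but it is a hypothesis you actually need, not a consequence. Second, your parenthetical justification that the $R_g$-action on $T$ is faithful (``a nonzero element of $K_g$ acts injectively on the nonzero $K_g$-module $T\otimes\Q$'') is imprecise when $g$ is reducible, since $K_g$ then has zero divisors; what actually ensures faithfulness is precisely the fact you establish a line later, namely $T\otimes\Q\cong K_g^s$, and in any case faithfulness is automatic once you have realized $R_g$ as the subring $\Z[F,V]\subseteq\End_\Z(T)$.
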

We briefly describe the functor $\cG_h$.
Given a pair $(T,F)\in \cM(h)$ we have a natural identification between $R_g$ and $\Z[F,V]$ given by $\alpha\mapsto F$, which induces an $R_g$-module structure on $T$.
Denote by $M$ this module and set $\cG_h((T,F)) = M$.

\begin{df}
   Define the functor $\cF_h : \AV(h) \to \BassMod{g}{s}$ as the composition $\cG_h\circ \Ford$.
\end{df}

\section{Isogeny classes and field extensions}
\label{sec:uptoisogeny}
Let $A$ be an abelian variety over $\F_q$ of dimension $g$, let $h=h_A$ be the characteristic polynomial of Frobenius of $A$.
Let $r$ be a positive integer and put $A_r=A \otimes_{\F_q} \F_{q^r}$ and denote by $h_r$ the characteristic polynomial of Frobenius of $A_r$.
Explicitly, if $\dim(A)=d$ and
\[ h = (x-\alpha_1)\cdot \ldots \cdot (x-\alpha_{2d}) \]
over the complex numbers, then 
\[ h_{r} = (x-\alpha^r_1)\cdot \ldots \cdot (x-\alpha^r_{2d}).\]
One has $h_r\in \Z[x]$ and, in particular, $h_r$ is a $q^r$-Weil polynomial of degree $2g$.
All these results are well-known, see for example \cite[Theorem 5.1.15]{Stichtenoth09}.

Recall that an abelian variety $A$ is isotypic if $A$ is isogenous to $B^n$ for some simple abelian variety $B$ and positive integer $n$.

\begin{prop}[{\cite[Prop.~1.2.6.1]{chaiconradoort14}}]
   If $A$ is isotypic then $A_r$ is isotypic for every $r\geq 1$.
\end{prop}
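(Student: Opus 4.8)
The plan is to reduce the statement to a polynomial criterion for being isotypic and then to an elementary fact about Galois orbits. First I would record the following, which is part of the Honda--Tate package (Section~\ref{sec:equivalences}) together with the multiplicativity of the characteristic polynomial of Frobenius: an abelian variety $C$ over $\F_q$ is isotypic if and only if $h_C$ is a power of an irreducible polynomial in $\Z[x]$. For the forward implication, if $C\sim B^n$ with $B$ simple then $h_C=h_B^n$ and $h_B$ is a power of the irreducible minimal polynomial of the Frobenius of $B$. Conversely, writing $C\sim\prod_i B_i^{n_i}$ with the $B_i$ simple and pairwise non-isogenous gives $h_C=\prod_i h_{B_i}^{n_i}$ with each $h_{B_i}$ a power of an irreducible polynomial $m_i$; if $h_C$ is a power of a single irreducible polynomial, unique factorisation in $\Q[x]$ forces all the $m_i$ to agree, so by Honda--Tate the $B_i$ all lie in one isogeny class and $C$ is isotypic.

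Next I would invoke the description of $h_r$ recalled in Section~\ref{sec:uptoisogeny}: its complex roots are the $r$-th powers of the roots of $h=h_A$. Assume $A$ is isotypic and write $h=m^k$ with $m\in\Z[x]$ irreducible; let $\beta_1,\dots,\beta_t$ be the (necessarily distinct) roots of $m$, which form a single orbit under $G=\Gal(\bar\Q/\Q)$. Then $h_r=\bigl(\prod_{j=1}^{t}(x-\beta_j^r)\bigr)^k$, so it suffices to show that $P(x):=\prod_{j=1}^{t}(x-\beta_j^r)$ is a power of an irreducible polynomial. Since $G$ permutes the $\beta_j$ transitively it permutes the set $\{\beta_1^r,\dots,\beta_t^r\}$ transitively, and the multiplicity with which a given value occurs among $\beta_1^r,\dots,\beta_t^r$ is a $G$-invariant, hence constant, function on that set. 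Therefore $P=\mu^{\ell}$, where $\mu$ is the minimal polynomial over $\Q$ of the algebraic integer $\beta_1^r$ (so $\mu\in\Z[x]$ and is irreducible) and $\ell$ is this common multiplicity. Hence $h_r=\mu^{k\ell}$ is a power of an irreducible polynomial, and $A_r$ is isotypic by the criterion above. One can also shorten the argument: from $A\sim B^n$ one has $A_r\sim (B_r)^n$, so it suffices to treat the case in which $A$ itself is simple.

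I expect the only point that needs care to be the claim that $P$ is an \emph{exact} power of an irreducible polynomial, rather than merely a polynomial whose radical is irreducible; this is exactly where the constancy of the multiplicities of the values $\beta_j^r$ along the Galois orbit enters. The remaining ingredients---Honda--Tate, the multiplicativity of the characteristic polynomial of Frobenius, and the explicit shape of $h_r$---are all already in place.
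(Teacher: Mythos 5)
Your proof is correct. Note that the paper does not supply its own argument for this proposition: it is stated with a citation to Chai--Conrad--Oort and followed by a counterexample to the converse, so there is no in-paper proof to compare against. The ingredients you use (Honda--Tate, multiplicativity of the characteristic polynomial of Frobenius, and the description of the roots of $h_r$ as the $r$-th powers of the roots of $h$) are exactly those recorded in Sections~\ref{sec:equivalences} and~\ref{sec:uptoisogeny}, and they reappear one proposition later in the proof of Proposition~\ref{prop:charpolyext} for irreducible $h$; your Galois-orbit argument is the natural extension of that reasoning to the case $h=m^k$. Two small remarks. First, the exact-power bookkeeping that you flag as the delicate step is not actually needed: your criterion (isotypic iff $h_C$ is a power of an irreducible polynomial) is equivalent to asking that the radical of $h_C$ be irreducible, and the radical of $P(x)=\prod_j(x-\beta_j^r)$ is visibly the minimal polynomial $\mu$ of $\beta_1^r$ once one knows that $G$ acts transitively on $\{\beta_1^r,\dots,\beta_t^r\}$; no multiplicity count is required. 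Second, a still shorter variant mirrors the proof of Proposition~\ref{prop:charpolyext}: $A$ is isotypic iff $\Q[\pi_A]$ (the center of $\End^0(A)$, isomorphic to $\Q[x]$ modulo the radical of $h_A$) is a field, and $\Q[\pi_{A_r}]=\Q[\pi_A^r]$ is then a $\Q$-subalgebra of the field $\Q[\pi_A]$, hence a domain, hence a field.
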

The statement does not hold over an arbitrary field, see {\cite[Ex.~1.2.6]{chaiconradoort14}}
Also, the converse does not hold: if $A_r$ is isotypic then $A$ does not need to be so, as the next example shows.
\begin{example}
   If $A$ is an abelian surface over $\F_{31}$ with characteristic polynomial
   \[ h_A= (x^2 - 3x + 31)(x^2 + 3x + 31) \]
   then $A$ is isogenous to the product of two non-isogenous elliptic curves $E_1$ and $E_2$. On the other hand $E_1$ and $E_2$ become isogenous over $\F_{31^2}$ and indeed the characteristic polynomial of $A_2=A\otimes \F_{31^2}$ is
   \[ h_{A_2}=(x^2 + 53x + 961)^2.\]
\end{example}

An isotypic abelian variety $A$ over $\F_q$ has characteristic polynomial of Frobenius of the form $h=g^s$, where $g$ is an irreducible $q$-Weil polynomial.
The next result is well-known and we include a proof for completeness.

\begin{prop}
\label{prop:charpolyext}
   Let $h$ be the characteristic polynomial of Frobenius of an abelian variety over $\F_q$.
   Assume that $h$ is irreducible. Then for every $r>0$ we have
   $h_{r}=g^s$
   for some irreducible polynomial $g$ and some positive integer $s$, both depending on~$r$.  
   Moreover, if $s=1$ then $A_r$ is simple (over $\F_{q^r}$).
   If $A$ is ordinary, then $s=1$ if and only if $A_r$ is simple .
\end{prop}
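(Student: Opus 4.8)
The plan is to prove the three assertions in order, using the Galois-theoretic description of the roots of $h$ and $h_r$ together with the fact that ordinary abelian varieties have real-root-free Frobenius.

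First I would handle the factorization $h_r = g^s$. Since $h$ is irreducible, its roots $\alpha_1,\dots,\alpha_{2d}$ form a single orbit under $G=\Gal(\bar\Q/\Q)$. The roots of $h_r$ are $\alpha_1^r,\dots,\alpha_{2d}^r$, and raising to the $r$-th power commutes with the $G$-action, so $G$ permutes the multiset $\{\alpha_i^r\}$ transitively. Hence every irreducible factor of $h_r$ in $\Z[x]$ occurs with the same multiplicity and has the same degree; writing $g$ for one such factor and $s$ for the common multiplicity gives $h_r=g^s$ with $g$ irreducible. (One should note $g$ is automatically monic with integer coefficients, being a $\Z[x]$-factor of the monic $h_r\in\Z[x]$.)

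Next, the implication "$s=1$ $\Rightarrow$ $A_r$ simple": if $s=1$ then $h_r=g$ is irreducible, in particular squarefree, so the isogeny class determined by $h_r$ (via Honda–Tate, as recalled after Theorem~\ref{thm:deligne}) cannot contain a product $B_1\times B_2$ of positive-dimensional abelian varieties, since such a product would have characteristic polynomial $h_{B_1}h_{B_2}$, a nontrivial factorization. Hence $A_r$ is simple. This direction is unconditional.

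Finally, the converse for ordinary $A$: I must show that if $A$ is ordinary and $A_r$ is simple then $s=1$. Suppose $s\geq 2$. By Honda–Tate the simple abelian variety $A_r$ has $h_{A_r}=g^s$ with $g$ irreducible; the relation between a power of an irreducible Weil polynomial and simplicity is governed by the local invariants of the endomorphism algebra $\End^0(A_r)$, which is a division algebra over $K=\Q[x]/(g)$ whose dimension over $K$ is $s^2$. For a simple variety $s$ can indeed be $>1$ in general — this is exactly the supersingular phenomenon — so the point is to use ordinarity to rule it out. Here is where I would invoke that $A$ ordinary implies $A_r$ ordinary (base change preserves the $p$-rank, since ordinariness is detected by the factorization $h_A \equiv (\text{unit})\cdot x^d \bmod p$ and this persists for $h_r$), and then use the standard fact that for an ordinary simple abelian variety the endomorphism algebra is commutative — equivalently $2\dim(A_r) = \deg h_{A_r} = [\,\End^0(A_r):\Q\,]=s^2\deg g$ forces, together with $\End^0(A_r)$ being a field (so $s=1$), the conclusion. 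Concretely: ordinariness gives that the Newton polygon of $A_r$ has slopes $0$ and $1$ only, so by Honda–Tate the local invariant $\mathrm{inv}_v(\End^0(A_r))$ is $0$ at every place $v$ of $K$ (the invariant at a slope-$0$ or slope-$1$ place is $0$, and $K$ is totally imaginary so there are no real places), whence $\End^0(A_r)=K$ and $s=1$.

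The main obstacle is the last step: one needs the precise Honda–Tate computation of the local invariants and the input that ordinary implies every slope is $0$ or $1$, so that all invariants vanish and the division algebra collapses to its center. Everything else — the transitivity argument for the factorization and the squarefree-implies-no-nontrivial-product argument — is formal. I would either cite the local-invariant formula from Tate's work (as the excerpt already cites \cite{Tate66}, \cite{Tate71}) or, more elementarily, cite the known statement that ordinary abelian varieties over finite fields have commutative endomorphism algebra, which immediately gives $s=1$ for simple ordinary $A_r$.
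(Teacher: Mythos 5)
Your proposal is correct, and for the second and third assertions it matches the paper's proof essentially verbatim (both reduce $s=1 \Rightarrow A_r$ simple to Honda–Tate plus multiplicativity of characteristic polynomials, and both reduce the ordinary converse to the fact that a simple ordinary abelian variety has irreducible characteristic polynomial of Frobenius — the paper cites Howe's \cite[Theorem 3.3]{Howe95} for this, while you sketch the Honda–Tate local-invariant computation that lies behind it).

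For the first assertion the route is genuinely different, though both are short and elementary. The paper considers the ring homomorphism $\chi:\Q[x]\to K_h$, $x\mapsto \alpha_h^r$, defines $g$ as the monic generator of $\ker\chi$, gets irreducibility from the fact that $\Q[x]/(g)$ embeds in the field $K_h$, and obtains $s=[K_h:K_g]$ by degree counting, so $h_r=g^s$ immediately. You instead argue via Galois transitivity on the roots: $\Gal(\bar\Q/\Q)$ acts transitively on $\{\alpha_1,\dots,\alpha_{2d}\}$, hence on the set of distinct $r$-th powers, so $h_r$ has a unique irreducible factor $g$ and therefore $h_r=g^s$. (Your further check that all multiplicities are equal is unnecessary once one knows there is a single irreducible factor, since then $s=\deg h_r/\deg g$ is forced; but it does no harm.) The paper's version has the small advantage that it produces the concrete tower $K_g\hookrightarrow K_h$ via $\alpha_g\mapsto\alpha_h^r$, which is reused explicitly in Section~\ref{sec:uptoiso} to define the functor $\cE_2$; your Galois argument proves the same factorization but does not set up that embedding for later use.
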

\begin{proof}
   Denote by $K_{h}$ the number field $\Q[x]/(h)$ and put $\alpha_h = x \mod h$.
   Consider the ring homomorphism
   \begin{align*}
      \chi:\Q[x] & \longrightarrow K_{h}\\
	      x & \longmapsto \alpha_h^r
   \end{align*}
   and denote by $g$ the generator of the kernel of $\chi$.
   Since $K_h$ is a field, the quotient $K_g=\Q[x]/(g)$ must be a field as well, that is, $g$ is irreducible.
   In particular, $g$ is the minimal polynomial of the roots of $h_{r}$ and hence $h_{r}$ is a power of $g$.
   Observe that $K_h$ is a finite extension of $K_g$, say of degree $[K_h:K_g]=s$.
   Since $h$ and $h_r$ have the same degree, we obtain that $h_r=g^s$.
   
   If $s=1$ then $h_r$ is irreducible, which, by Honda-Tate theory, implies that $A_r$ is a simple abelian variety over $\F_{q^r}$.
   The last statement follows from the fact that a simple ordinary abelian variety has an irreducible characteristic polynomial, see \cite[Theorem 3.3]{Howe95}.
\end{proof}

\section{Isomorphism classes and field extensions}
\label{sec:uptoiso}
Let $h$ be the characteristic polynomial of a simple ordinary abelian variety $A$ over $\F_q$ and $h_r$ the characteristic polynomial of $A_r=A\otimes \F_{q^r}$.
By Proposition \ref{prop:charpolyext} we know that $h_r=g^s$ for some irreducible polynomial $g$.
Put $K_g=\Q[x]/(g)$ and $K_h=\Q[x]/(h)$, and denote by $\alpha_g$ and $\alpha_h$ the classes of $x$ modulo $g$ and modulo $h$, respectively.
Define $R_g=\Z[\alpha_g,q^r/\alpha_g]\subset K_g$ and $R_h=\Z[\alpha_h,q/\alpha_h]\subset K_h$.

Since $h$ is irreducible, by Theorem \ref{thm:powers} the abelian varieties isogenous to $A$ correspond via the functor $\cF_h$ to the fractional ideals of the order $R_h$ and the abelian varieties isogenous to $A_r$ functorially correspond to the modules in $\BassMod{g}{s}$.
We want to understand how the functor $-\otimes_{\F_q} \F_{q^r}$ acts on these categories.

As already shown in the proof of Proposition \ref{prop:charpolyext}, the field $K_g$ is naturally a subfield of $K_h$ where the inclusion is given by $\alpha_g\mapsto \alpha_h^r$.
Equivalently, we have that $K_h$ is a field extension of degree 
\[ [K_h:K_g] = \dfrac{\deg(h)}{\deg(g)} = s. \]
So in particular there exists a polynomial $l\in K_g[y]$ of degree $s$ such that
\[ \xymatrix{
K_g \ar@{^{(}->}[dr] \ar@{^{(}->}[r]	& K_h\\
			        & \dfrac{K_g[y]}{(l)}\ar[u]_\vphi^[left]{\simeq}}
			        \]
where the isomorphism $\vphi$ is given by
\begin{align*}
 \bar y\mapsto \alpha_h,\\
 \alpha_g \mapsto \alpha_h^r
\end{align*}
where $\bar y = y \mod l$.
Observe that
\[ \dfrac{K_g[y]}{(l)} = K_g\oplus \bar y K_g \oplus \ldots \oplus \bar y ^{s-1} K_g \]
and that there is a natural isomorphism of $R_g$-modules
\begin{align*}
   \psi:K_g\oplus \bar y K_g \oplus \ldots \oplus \bar y ^{s-1} K_g & \overset{\sim}{\longrightarrow} \overbrace{K_g\times \ldots \times K_g}^{s-\text{times}} \\
   \sum_{i=0}^{s-1} b_i\bar y^i & \longmapsto (b_0,\ldots,b_{s-1}).
\end{align*}
Consider the functors
\begin{align*}
  \cE_1:\Mod{h} \to \Mod{h_r}\\
  (T,F) \mapsto (T,F^r)
\end{align*}
and
\begin{align*}
  \cE_2:\BassMod{h}{1} \to \BassMod{g}{s}\\
  I \mapsto \psi(\vphi^{-1}(I)),
\end{align*}
the action on morphisms being the obvious one.
Let $\cG_h:\Mod{h}\to \BassMod{h}{1}$ and $\cG_{h_r}:\Mod{h_r}\to \BassMod{g}{s}$ be defined as in Theorem \ref{thm:powers}.

\begin{thm}
\label{thm:functors}
  We have a commutative diagram of functors:
  \[ \xymatrix{	\AV(h)\ar[d]_{-\otimes \F_{q^r}} \ar[r]_\Ford \ar@/^1pc/[rr]^{\cF_h} 	&\Mod{h} \ar[d]_{\cE_1} \ar[r]_{\cG_h}	& \BassMod{h}{1}\ar[d]_{\cE_2}\\
		\AV(h_r)\ar[r]^\Ford \ar@/_1pc/[rr]_{\cF_{h_r}}		&\Mod{h_r} \ar[r]^{\cG_{h_r}}  	&\BassMod{g}{s}} \]
\end{thm}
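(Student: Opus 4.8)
The plan is to reduce to the commutativity of the two inner squares, since the curved arrows are by definition the composites $\cG_h\circ\Ford$ and $\cG_{h_r}\circ\Ford$. I would read both squares as commuting up to natural isomorphism, where in each case the natural isomorphism I construct is the identity on the underlying abelian groups; the task is then only to match the extra structure.

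\emph{Left square.} Given $A\in\AV(h)$ with $\Ford(A)=(T,F)$, I would first check that $(T,F^r)$ lies in $\Modord{q^r}$: the operator $F^r\otimes\Q$ is semisimple with eigenvalues $\alpha_i^r$ of absolute value $\sqrt{q^r}$; its characteristic polynomial is $h_r$, which is ordinary because an algebraic number is a $p$-adic unit if and only if one (equivalently, every) power of it is; and $F^rV^r=q^r$, where $V$ is the unique endomorphism with $FV=q$. It then remains to identify $\Ford(A\otimes_{\F_q}\F_{q^r})$ with $(T,F^r)$. For this I would invoke the compatibility of Deligne's equivalence with base field extension: the Frobenius of $A\otimes_{\F_q}\F_{q^r}$ is the $r$-th power of that of $A$ (Section~\ref{sec:uptoisogeny}), the $\Z$-module $T$ arises from a lift of $A$ to characteristic zero (the Serre--Tate canonical lift), which is unaffected by the unramified extension $W(\F_q)\hookrightarrow W(\F_{q^r})$, and $F$ is by construction the operator induced by Frobenius — so it passes to $F^r$. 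On morphisms there is nothing to do, since $\Ford$, $\cE_1$, and $-\otimes_{\F_q}\F_{q^r}$ all send a morphism to the same underlying map. I expect this identification — which I would either cite or extract from Deligne's construction — to be the only step of the proof carrying genuine content.

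\emph{Right square.} I would fix $(T,F)\in\Mod{h}$ and unwind both composites. Since $h$ is irreducible, $\Z[F,V]\otimes\Q=\Q[F]\cong K_h$, so $\cG_h(T,F)$ is $T$ regarded as a fractional $R_h$-ideal $I\subset K_h$ via $\alpha_h\mapsto F$, and hence $\cE_2(\cG_h(T,F))=\psi(\vphi^{-1}(I))$. Going the other way, $\cE_1(T,F)=(T,F^r)$, and under the identification $\Z[F,V]\cong R_h$ the subring $\Z[F^r,V^r]$ maps onto $\Z[\alpha_h^r,\,q^r/\alpha_h^r]$, which is precisely the subring $R_g\subseteq R_h$ determined by $\alpha_g\leftrightarrow\alpha_h^r$; thus $\cG_{h_r}(\cE_1(T,F))$ is the same module $I$, now viewed as an $R_g$-module with $\alpha_g$ acting as multiplication by $\alpha_h^r$ (torsion free of rank $[K_h:K_g]=s$). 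Finally I would observe that the inclusion $I\hookrightarrow K_h$ is $R_g$-linear for this action, that $\vphi^{-1}:K_h\to K_g[y]/(l)$ is a $K_g$-algebra isomorphism and hence $R_g$-linear, and that $\psi$ is $K_g$-linear, since it merely reads off coordinates in the $K_g$-basis $1,\bar y,\dots,\bar y^{\,s-1}$. Composing, $\psi\circ\vphi^{-1}$ restricts to an isomorphism of $R_g$-modules $\cG_{h_r}(\cE_1(T,F))=I\xrightarrow{\sim}\psi(\vphi^{-1}(I))=\cE_2(\cG_h(T,F))$; in fact, with the natural identification $T\otimes\Q\cong K_g^s$ used to realize $\cG_{h_r}(\cE_1(T,F))$ inside $K_g^s$, the two composites agree on the nose. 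Naturality in $(T,F)$ is automatic, because a morphism of pairs is a map of underlying groups commuting with $F$ (hence with $F^r$), carried by each of $\cG_h,\cG_{h_r},\cE_1,\cE_2$ to that same map, which $\psi\circ\vphi^{-1}$ intertwines.
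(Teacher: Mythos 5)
Your proof takes essentially the same approach as the paper's, which consists only of asserting that $\Ford(A_r)=(T,F^r)$ (compatibility of Deligne's functor with base field extension) and then observing that the right square commutes by unwinding the definitions of $\cG_h$, $\cG_{h_r}$, $\cE_1$, $\cE_2$, $\vphi$, $\psi$ given just before the theorem. The paper's own proof is three sentences long; you fill in the verifications it leaves implicit, such as checking that $(T,F^r)$ actually lies in $\Modord{q^r}$ and that $\psi\circ\vphi^{-1}$ is an $R_g$-linear identification, which is useful but not a different argument.
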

\begin{proof}
Let $A$ be an abelian variety in $\AV(h)$ and put $\Ford(A)=(T,F)$.
As usual denote $A\otimes \F_{q^r}$ by $A_r$.
Then $\Ford(A_r)=(T,F^r)$ which proves the commutativity of the left square of the diagram.
The commutativity of the right square follows from the above discussion.
\end{proof}
A straightforward generalization of the previous result leads to the following corollary.
\begin{cor}
\label{cor:functors}
  Let $t$ be a positive integer.
  We have a commutative diagram of functors:
  \[ \xymatrix{	\AV(h^t)\ar[d]_{-\otimes \F_{q^r}} \ar[r]_\Ford \ar@/^1pc/[rr]^{\cF_{h^t}} 	&\Mod{h^t} \ar[d]_{\cE_1} \ar[r]_{\cG_{h^t}}	& \BassMod{h}{t}\ar[d]_{\cE_2}\\
		\AV(h^t_r)\ar[r]^\Ford \ar@/_1pc/[rr]_{\cF_{h^t_r}}		&\Mod{h^t_r} \ar[r]^{\cG_{h^t_r}}  	&\BassMod{g}{ts}} \]
\end{cor}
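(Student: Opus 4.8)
The plan is to follow the two-line proof of Theorem \ref{thm:functors} essentially verbatim, the only change being that $\cE_1$ and $\cE_2$ are now applied to modules of rank $t$ (resp.\ $ts$) rather than rank $1$ (resp.\ $s$). The functor $\cE_1\colon\cM(h^t)\to\cM(h^t_r)$ is given by the same formula $(T,F)\mapsto(T,F^r)$, which makes no reference to the rank, and I would define $\cE_2\colon\BassMod{h}{t}\to\BassMod{g}{ts}$ by applying $\psi\circ\vphi^{-1}$ in each of the $t$ coordinates: for $M\subseteq K_h^t$ one sets $\cE_2(M)=(\psi\circ\vphi^{-1})^{\oplus t}(M)\subseteq K_g^{ts}$, using
\[ (\psi\circ\vphi^{-1})^{\oplus t}\colon K_h^t\;\xrightarrow{\ \sim\ }\;\Big(\tfrac{K_g[y]}{(l)}\Big)^{t}\;\xrightarrow{\ \sim\ }\;(K_g^s)^{t}=K_g^{ts}. \]
First I would check that $\cE_2$ lands in $\BassMod{g}{ts}$: torsion-freeness is preserved because $\psi$ and $\vphi$ are $\Q$-linear isomorphisms, and a dimension count over $K_g$ using $\dim_\Q K_h=s\deg g=\deg h$ gives $\cE_2(M)\otimes_{R_g}\Q\simeq K_g^{ts}$. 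I would also record that $\psi\circ\vphi^{-1}$ is $R_g$-linear, where $K_g$ is viewed as a subfield of $K_h$ via $\alpha_g\mapsto\alpha_h^r$ and $R_g\subseteq K_g$; this is what makes $\cE_2$ a functor on $R_g$-modules.

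The left square will commute for the reason given in Theorem \ref{thm:functors}: if $\Ford(A)=(T,F)$ then $\Ford(A\otimes_{\F_q}\F_{q^r})=(T,F^r)=\cE_1(\Ford(A))$.

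For the right square I would take $(T,F)\in\cM(h^t)$ and let $M=\cG_{h^t}((T,F))$, that is, $T$ with the $R_h$-action $\alpha_h\mapsto F$, realized inside $K_h^t$. Along the inclusion $R_g\hookrightarrow R_h$ sending $\alpha_g\mapsto\alpha_h^r$ --- equivalently $\Z[F^r,V^r]\subseteq\Z[F,V]$ with $V=q/\alpha_h$, so that $q^r/\alpha_g\mapsto V^r$ --- restriction of scalars turns $M$ into the $R_g$-module $T$ with $\alpha_g$ acting as $F^r$, which is exactly $\cG_{h^t_r}((T,F^r))=\cG_{h^t_r}(\cE_1((T,F)))$. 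Since $(\psi\circ\vphi^{-1})^{\oplus t}$ is an isomorphism of $R_g$-modules carrying $M$ onto $\cE_2(M)$, it identifies $\cE_2(\cG_{h^t}((T,F)))$ with $\cG_{h^t_r}(\cE_1((T,F)))$ in $\BassMod{g}{ts}$; compatibility with morphisms is immediate, since every arrow in the diagram is the identity on underlying $\Z$-modules. This proves commutativity of the right square.

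The only point requiring any real care is the bookkeeping about embeddings. A rank-$t$ module $M\subseteq K_h^t$ need not be a direct sum of fractional $R_h$-ideals, so one must apply $(\psi\circ\vphi^{-1})^{\oplus t}$ to $M$ as a submodule of $K_h^t$ and check that this is well defined and functorial, and that the identifications $M\otimes\Q\simeq K_h^t$ and $\cE_2(M)\otimes\Q\simeq K_g^{ts}$ implicit in $\cG_{h^t}$ and $\cG_{h^t_r}$ are compatible with $(\psi\circ\vphi^{-1})^{\oplus t}$, so that the square genuinely commutes and not merely up to an unspecified isomorphism. I do not expect this to be hard; it is exactly why the statement is a straightforward generalization of Theorem \ref{thm:functors} and does not require a new idea.
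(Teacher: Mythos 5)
Your proposal is correct and matches the paper's intent: the paper offers no separate proof, calling the corollary ``a straightforward generalization'' of Theorem \ref{thm:functors}, and what you write is exactly that generalization made explicit (extending $\cE_2$ coordinate-wise via $(\psi\circ\vphi^{-1})^{\oplus t}$, checking $R_g$-linearity, and observing that the left square commutes for the same Deligne-functor reason while the right square is restriction of scalars along $R_g\hookrightarrow R_h$, $\alpha_g\mapsto\alpha_h^r$). Your closing remark that commutativity should be read up to the identification furnished by $(\psi\circ\vphi^{-1})^{\oplus t}$ is the right level of care, consistent with the paper's convention of fixing an embedding of each module into the relevant $K_g^n$.
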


\begin{remark}
\label{rmk:squarefree}
In this section, and in the following ones, we assume that the $q$-Weil polynomial $h$ is irreducible, that is, that $\AV(h)$ is a simple isogeny class.
If we relax this assumption and instead assume that $h$ is squarefree then the extension $\AV(h_r)$ might fail to be a ``pure power''.
More precisely, if $A\in \AV(h)$ has isogeny decomposition
\[ A \sim_{\F_q} B_1\times \ldots \times B_t, \]
where $B_1,\ldots,B_t$ are simple abelian varieties over $\F_q$, then 
\begin{equation}
\label{eq:isogdec}\tag{*}
A_r \sim_{\F_{q^r}} C_1^{s_1}\times \ldots \times C_{t'}^{s_{t'}},
\end{equation}
where $C_1,\ldots,C_{t'}$ are simple abelian varieties over $\F_{q^r}$ and $s_1,\ldots,s_{t'}$ are positive integers, not necessarily equal.
Nevertheless, we can still apply all the results developed in this section and in the following ones if we assume that $h_r=g^s$ for a squarefree ${q^r}$-Weil polynomial, that is, in \eqref{eq:isogdec} we have $s_1=\ldots=s_{t'}=s$.
\end{remark}

\section{Computations in $\BassMod{g}{s}$}
\label{sec:computations}
Let the notation be as in Section \ref{sec:uptoiso}.
In this section, firstly, we describe how to compute representatives of the isomorphism classes of $\BassMod{g}{s}$ and the functor $\cE_2$ in the cases when $h_r$ is irreducible or the order $R_g$ is Bass, see Subsections \ref{subsec:isom_irred_case} and \ref{subsec:isom_bass_case}, respectively.
In practice, it turns out that most isogeny classes $\AV(h_r)$ fall into one of these two cases. 
 
Secondly, in Subsection \ref{subsec:isom_general}, we focus on the problem of determining when two modules in $\BassMod{g}{s}$ are isomorphic.
We present efficient solutions to the problem in the cases described in Subsections \ref{subsec:isom_irred_case} and \ref{subsec:isom_bass_case}.
Morover we describe a general method to solve the isomorphism problem for modules in $\BassMod{g}{s}$, which in practice turns out to be slower than the previous two, and can not be used to list the representatives of the isomorphism classes.

Finally, in Subsection \ref{subsec:appl_abvar} we give examples of computations of base field extension of abelian varieties.

\subsection{Isomorphism classes when $h_r$ is irreducible}
\label{subsec:isom_irred_case}
Assume that the polynomial $h_r$ is irreducible, that is, $h_r=g$ or equivalently $s=1$.
We fix once and for all an isomorphism $K_h\simeq K_g$.
This allows us to identify $R_g$ with a finite index order contained in $R_h$ and consequently we can identify the objects of the category $\BassMod{g}{1}$ with fractional $R_g$-ideals.
Moreover the operation of ideal multiplication induces the structure of a commutative monoid on $\BassMod{g}{1}$.
The set of ideal classes inherits such a structure: we call it the \emph{ideal class monoid} of $R_g$ and denote it by $\ICM(R_g)$.
In \cite{MarICM18} we give an effective algorithm to compute $\ICM(R_g)$.
Moreover, it is easy to determine if two fractional $R_g$-ideals $I_1$ and $I_2$ define the same class in $\ICM(R_g)$.
Recall the definition of \emph{colon ideal}
\[ (I_1:I_2)=\set{a\in K_g : a I_2\subseteq I_1}. \]
\begin{thm}{\cite[Corollary 4.5]{MarICM18}}
    The fractional $R_g$-ideals $I_1$ and $I_2$ are isomorphic if and only if the following two conditions hold:
    \begin{enumerate}[(1)]
        \item \label{item:1} $1\in (I_1:I_2)(I_2:I_1)$;
        \item \label{item:2} the fractional ideal $(I_1:I_2)$ is a principal $(I_1:I_1)$-ideal.
    \end{enumerate}
\end{thm}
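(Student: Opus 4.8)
The plan is to reduce the statement to a concrete criterion about when two fractional $R_g$-ideals $I_1$ and $I_2$ lie in the same class of the ideal class monoid $\ICM(R_g)$, i.e.\ when there exists $a \in K_g^\times$ with $aI_2 = I_1$. First I would recall the basic functoriality of colon ideals: for any fractional ideals one has $(I_1:I_2)I_2 \subseteq I_1$, the inclusion $(I_1:I_2)(I_2:I_3) \subseteq (I_1:I_3)$, and the identity $(I:I)$ is the multiplicator ring (largest order in which $I$ is a module). The key reformulation is that $I_2 \simeq I_1$ as $R_g$-modules if and only if $I_1 = aI_2$ for some $a \in K_g^\times$, since an $R_g$-linear isomorphism $I_2 \to I_1$ extends to a $K_g$-linear automorphism of $K_g$, which is multiplication by a unit $a$. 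So the whole statement is: such an $a$ exists iff conditions \eqref{item:1} and \eqref{item:2} hold.

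For the forward direction, suppose $I_1 = aI_2$. Then directly $(I_1:I_2) = aR'$ where $R' = (I_2:I_2)$ is the multiplicator ring of $I_2$, and likewise $(I_2:I_1) = a^{-1}R'$; hence $(I_1:I_2)(I_2:I_1) = R' \ni 1$, giving \eqref{item:1}, and $(I_1:I_2) = aR' = a(I_2:I_2)$. But $aI_2 = I_1$ forces $R' = (I_2:I_2) = (aI_2:aI_2) = (I_1:I_1)$, so $(I_1:I_2)$ is the principal $(I_1:I_1)$-ideal generated by $a$, giving \eqref{item:2}. For the converse, assume \eqref{item:1} and \eqref{item:2}; write $(I_1:I_2) = a(I_1:I_1)$ for some $a \in K_g^\times$. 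The candidate isomorphism is multiplication by $a$. From $a(I_1:I_1) = (I_1:I_2) \subseteq (I_1:I_2)$ we get, applying $\cdot\, I_2$, that $aI_2 \cdot (I_1:I_1)$... more carefully: since $a \in (I_1:I_2)$ we have $aI_2 \subseteq I_1$. For the reverse inclusion, condition \eqref{item:1} says $1 \in (I_1:I_2)(I_2:I_1)$, so $I_1 = 1\cdot I_1 \subseteq (I_1:I_2)(I_2:I_1)I_1 \subseteq (I_1:I_2)I_2 \subseteq aI_1$... I need to track this so that the containments collapse; the point is that \eqref{item:2} upgrades the trivial $(I_1:I_2)(I_2:I_1) \subseteq (I_1:I_1)$ to an equality of the form $a(I_1:I_1)\cdot(I_2:I_1) = (I_1:I_1)$, which says $(I_2:I_1) = a^{-1}(I_1:I_1)$ is principal too, and then $aI_2 \subseteq I_1$ together with $a^{-1}I_1 \subseteq I_2$ (from $a^{-1} \in (I_2:I_1)$) yields $I_1 = aI_2$.

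I expect the main obstacle to be the converse direction, specifically showing the reverse inclusion $I_1 \subseteq aI_2$: condition \eqref{item:1} alone only gives $(I_1:I_2)(I_2:I_1) = (I_1:I_1)$ (since $\supseteq$ comes from $1$ being in the product and $\subseteq$ is automatic), and one must combine this with the principality in \eqref{item:2} to deduce that $(I_2:I_1)$ equals $a^{-1}(I_1:I_1)$ rather than merely containing it; multiplying $(I_1:I_2) = a(I_1:I_1)$ by $(I_2:I_1)$ and using $(I_1:I_1)(I_2:I_1) = (I_2:I_1)$ (as $(I_2:I_1)$ is an $(I_1:I_1)$-module, which itself needs a small check) gives $a(I_2:I_1) = (I_1:I_1)$, hence $1 \in a(I_2:I_1)$, i.e.\ $a^{-1} \in (I_2:I_1)$, i.e.\ $a^{-1}I_1 \subseteq I_2$. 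The delicate point is verifying that $(I_2:I_1)$ is stable under $(I_1:I_1)$, which follows from $(I_1:I_1)(I_2:I_1) \subseteq (I_2:I_1)$ once one notes $(I_2:I_1)(I_1:I_1) \subseteq (I_2:I_1)$ using the associativity of the colon/product operations together with $(I_1:I_1)I_1 \subseteq I_1$. Since these are all standard manipulations with colon ideals over orders in étale $\Q$-algebras, I would cite \cite{MarICM18} for the underlying lemmas and present only the two chains of inclusions above.
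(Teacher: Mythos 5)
The paper does not actually prove this statement; it imports it verbatim as \cite[Corollary~4.5]{MarICM18}, so there is no in-paper argument to compare against. Judged on its own, your proof is correct and complete once the hedged passages are cleaned up. The reduction to ``$I_1\simeq I_2$ iff $I_1=aI_2$ for some $a\in K_g^\times$'' is the standard fact that an $R_g$-linear map between full lattices in $K_g$ extends to an element of $\End_{K_g}(K_g)=K_g$; the forward direction is a direct computation with $(aI_2:I_2)=a(I_2:I_2)$ and $(aI_2:aI_2)=(I_2:I_2)$; and the converse correctly combines the two hypotheses. The one point you flagged as ``delicate'' is indeed the crux, and your handling of it is right: $(I_1:I_2)(I_2:I_1)$ is an $(I_1:I_1)$-module containing $1$, hence equals $(I_1:I_1)$; substituting $(I_1:I_2)=a(I_1:I_1)$ and using $(I_1:I_1)(I_2:I_1)=(I_2:I_1)$ gives $a(I_2:I_1)=(I_1:I_1)$, so $a^{-1}\in(I_2:I_1)$, and together with $a\in(I_1:I_2)$ this forces $I_1=aI_2$. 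If you write this up, replace the exploratory ``I need to track this so that the containments collapse'' passage with the clean chain you arrive at in the final two sentences of the converse, and state explicitly the small lemma that colon ideals with a fixed first (or second) argument are modules over the relevant multiplicator ring, since both directions use it.
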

If \ref{item:1} is satisfied then $(I_1:I_2)$ is an invertible fractional $(I_1:I_1)$-ideal, and there are well known algorithms to check whether it is principal.
See also \cite[Algorithm 5]{MarICM18}.

\subsection{Isomorphism classes when $R_g$ is Bass}
\label{subsec:isom_bass_case}
	Recall that an order $S$ in a number field $K$ is \emph{Bass} if every overorder is Gorenstein, or equivalently, if the maximal order $\cO_K$ has cyclic index in $S$, that is, the finite $S$-module $\cO_K/S$ is cyclic.
	For the proofs of these statements and other equivalent definitions, see for instance \cite[Theorem 2.1]{LevyWiegand85}.
	Examples of Bass orders are maximal orders and orders in quadratic number fields.
	
	If the order $R_g$ is Bass, the modules in $\BassMod{g}{s}$ can be written up to $R_g$-linear isomorphism as a direct sum of fractional $R_g$-ideals.
	More precisely, we have the following theorem, see \cite{basshy63}
	 or \cite{borevshaf66}.
\begin{thm}
\label{thm:bassdecom}
   Assume that $R_g$ is a Bass order and let $M$ be in $\BassMod{g}{s}$.
   Then there are fractional $R_g$-ideals $J_1,\ldots,J_s$ satisfying
   \[ (J_1:J_1)\subseteq \ldots \subseteq (J_s:J_s) \]
   and elements $v_1,\ldots,v_s$ in $M$ such that
   \[ M = J_1v_1 \oplus \ldots \oplus J_sv_s. \]   
   Moreover, given another module $N$ in $\BassMod{g}{s}$ with decomposition
   \[ N = I_1u_1 \oplus \ldots \oplus I_su_s, \]
   we have that $M$ and $N$ are $R_g$-isomorphic if and only if
   \[ (J_k:J_k)=(I_k:I_k) \]
   for each $k$ and 
   \[ J_1\cdot\ldots\cdot J_s \simeq I_1\cdot\ldots\cdot I_s. \]
\end{thm}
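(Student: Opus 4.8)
The statement is classical, going back to Bass \cite{basshy63} and worked out for orders in \cite{borevshaf66}; the plan is to reduce an arbitrary module in $\BassMod{g}{s}$ to a direct sum of $s$ fractional ideals, then to normalize the multiplier rings into an ascending chain, and finally to pin down the isomorphism invariants. The one structural consequence of the Bass hypothesis I would use throughout is that \emph{every fractional $R_g$-ideal $I$ is invertible over its multiplier ring $(I:I)$}: indeed $(I:I)$ is an overorder, hence Gorenstein, and a fractional ideal whose multiplier ring equals its (Gorenstein) base is invertible, see \cite[Theorem 2.1]{LevyWiegand85} and \cite{basshy63}. With this in hand, the first step is Bass's theorem that any $M\in\BassMod{g}{s}$ is $R_g$-isomorphic to a direct sum $L_1\oplus\cdots\oplus L_s$ of fractional $R_g$-ideals: localizing at each prime of $R_g$ one is reduced to a local Bass order, which has finite representation type with every indecomposable torsion-free module of rank $1$, and a patching argument (or Bass's direct global argument via the $2$-generation of ideals over Bass orders) then yields the global decomposition; see \cite{basshy63} and \cite{borevshaf66}.

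\textbf{Normalizing the multiplier rings.} The second step is a two-summand normalization lemma: for fractional ideals $I,J$ there are fractional ideals $I',J'$ with $I\oplus J\simeq I'\oplus J'$, with $(I':I')\subseteq(J':J')$, and with $I'J'\simeq IJ$; one can take $(I':I')=(I:I)\cap(J:J)$ and $(J':J')$ the smallest overorder of $R_g$ containing both $(I:I)$ and $(J:J)$. This is the crucial step and the place where invertibility over the multiplier ring is consumed: it is what lets one ``absorb'' one summand into the other and re-split it. Running this lemma bubble-sort fashion over $L_1\oplus\cdots\oplus L_s$ — each interchange only enlarges the multiplier ring of the right-hand summand, and the poset of overorders of $R_g$ is finite, so the procedure terminates — produces $M=J_1v_1\oplus\cdots\oplus J_sv_s$ with $(J_1:J_1)\subseteq\cdots\subseteq(J_s:J_s)$, while the product $J_1\cdots J_s$ stays constant up to isomorphism.

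\textbf{The isomorphism criterion.} For the ``only if'' direction I would argue that the chain $(J_k:J_k)_k$ and the isomorphism class of $J_1\cdots J_s$ are invariants of $M$. The smallest multiplier ring is visibly an invariant, since $(J_1:J_1)=\set{x\in K_g : xM\subseteq M}$, and the rest of the chain is recovered by a similar inductive procedure, so the multiset of multiplier rings — hence, since they form a chain, the chain itself — is determined. The class of $J_1\cdots J_s$ is an invariant because, using the decomposition above together with invertibility of each $L_k$ over its multiplier ring, $\bigwedge^s_{R_g}M$ modulo torsion is a well-defined fractional ideal isomorphic to $J_1\cdots J_s$, and this is manifestly preserved under $R_g$-isomorphism. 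Conversely, when the chains of multiplier rings of $M$ and $N$ agree, an induction on $s$ — peeling off the smallest multiplier ring $S_1$ as a direct summand via an identity of the shape $A\oplus B\simeq(A:A)\oplus AB$, valid when $(A:A)\subseteq(B:B)$ with $A$ invertible over $(A:A)$ — reduces both $M$ and $N$ to a common normal form $(J_1:J_1)\oplus\cdots\oplus(J_{s-1}:J_{s-1})\oplus C$, with the class of $C$ over $(J_s:J_s)$ equal to that of $\prod_k J_k$, respectively $\prod_k I_k$; since these are isomorphic by hypothesis, $M\simeq N$.

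\textbf{Main obstacle.} The heart of the proof is the normalization lemma (equivalently, the trading identity $A\oplus B\simeq(A:A)\oplus AB$): this is exactly where the Bass hypothesis is genuinely used, through invertibility of fractional ideals over their multiplier rings, and without it even the decomposition into ideals in the first step can fail. The rest is bookkeeping, but one must be careful to track the \emph{chain} of multiplier rings rather than merely their set, and to verify that $\bigwedge^s$ really returns a fractional ideal; these are the points most likely to need care in a full write-up.
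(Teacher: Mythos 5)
The paper does not prove this statement: Theorem~\ref{thm:bassdecom} is cited to Bass~\cite{basshy63} and Borevich--Shafarevich~\cite{borevshaf66}, and the paragraph that follows only recalls the \emph{algorithmic} construction of the decomposition along the lines of \cite[Lemma 7]{borevshaf66}, namely: choose a functional $\phi\in\Hom_\Z(M,\Z)$ (via \cite[Lemma 6]{BF65}) whose ``multiplicator'' is exactly the multiplier ring $S=(M:M)$, use it and the idempotents of $K_g^s$ to split off a single fractional-ideal summand $Iv$ with $(I:I)=S$ minimal, and recurse on the complement $M_1$, whose multiplier ring automatically contains $S$; the ascending chain thus falls out of the recursion for free. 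Your outline takes a different, also classical route: you first invoke the decomposition into \emph{arbitrary} fractional ideals (local finite representation type plus patching, or Bass's two-generator argument) and then impose the ascending-chain normal form afterwards by an exchange lemma $A\oplus B\simeq(A:A)\oplus AB$ applied bubble-sort fashion, and you prove the isomorphism criterion via a normal form $S_1\oplus\cdots\oplus S_{s-1}\oplus\prod_kJ_k$ and a $\bigwedge^s$-invariant. Both routes are legitimate; the paper's peels off the minimal summand directly and is better suited to computation, whereas yours isolates the exchange identity as the one place where the Bass hypothesis enters, which is conceptually cleaner. One local gap worth fixing in your write-up: your termination argument for the bubble sort (``each interchange only enlarges the multiplier ring of the right-hand summand, and the poset of overorders is finite'') is not by itself conclusive, because the left-hand multiplier ring simultaneously shrinks, so one cannot just cite finiteness. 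A clean fix is either to run a selection sort instead (peel off the unique summand whose multiplier ring is $(M:M)$, exactly as the paper does, so only one position is ever touched), or to exhibit a genuinely decreasing potential such as $\sum_{i=1}^{s} i\cdot\log[\cO_{K_g}:S_i]$, which strictly drops under each nontrivial exchange at positions $i<j$.
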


	Let $M$ be in $\BassMod{g}{s}$, with $R_g$ Bass.
	We can explicitly compute a decomposition
	\[ \quad M = J_1v_1 \oplus \ldots \oplus J_sv_s \]
by following the proof of \cite[Lemma 7]{borevshaf66}.
	For completeness we briefly recall the method.
	We start with a $\Z$-basis of $M$
	\[ M=a_1\Z\oplus \ldots \oplus a_l\Z, \]
	where $l=\deg h_r = \dim_\Q K_g^s$.
	Let $S$ be the multiplicator ring of $M$ in $K_g$, that is,
	\[ S=\set{ a\in K_g : aM\subseteq M }. \]
	By \cite[Lemma 6]{BF65} there exists $\phi \in \Hom_\Z(M,\Z)$ such that $a\phi \not\in \Hom_\Z(M,\Z)$ for every $a\in \cO_{K_g}\setminus S$. 
	Let $e_1,\ldots,e_s$ be the orthogonal idempotents of $K_g^s$ and define $v_i$ in $K_g^s$ to be the dual basis with respect to $\phi$, that is, $(e_i\phi)(v_j)=1$ if $i=j$ and $0$ otherwise. 
	Put $v=v_1+\ldots+v_s$.
	Now, each element $w$ of $M$ in $K_g^s$ can be written in a unique way as
	\[ w=\xi v + y \]
	for $\xi \in K_g$ and $y$ orthogonal to $v$.
	Let $I$ be the subset of coefficients $\xi$ in $K_g$ when $w$ runs over all elements of $M$.
	Observe that $I$ is a fractional $R_g$-ideal and one can prove that the multiplicator ring of $I$ is $S$.
	We then have a decomposition
	\[ M=Iv\oplus M_1, \]
	where $M_1$ is an object of $\BassMod{g}{s-1}$ with multiplicator ring containing $S$.
	We can then proceed recursively to obtain the whole decomposition of $M$.
	
\subsection{Isomorphism testing}
\label{subsec:isom_general}
	Let $M_1$ and $M_2$ be two modules in $\BassMod{g}{s}$.	
	If $s=1$ we can use \cite[Algorithm 5]{MarICM18}  to check if they are isomorphic.	
	If $R_g$ is Bass, for any $s\geq 1$, we can use the algorithm described in Subsection \ref{subsec:isom_bass_case}  to compute the decompositions of $M_1$ and $M_2$ and then conclude by using Theorem \ref{thm:bassdecom} together with \cite[Algorithm 5]{MarICM18}.
	
	If $s>1$ and $R_g$ is not Bass we can use the following method.
	Observe that $M_1$ and $M_2$ are isomorphic as $R_g$-modules if and only if they are so as $\Z[\alpha_g]$-modules, since $\Z[\alpha_g]$ has finite index in $R_g$.
	Let $m_1$ (respectively $m_2$) be the matrix that represents multiplication by $\alpha_g$ with respect to any $\Z$-basis of $M_1$ (respectively $M_2$).
	Observe that $m_1$ and $m_2$ are $N\times N$ matrices with integer entries and the same characteristic polynomial $g^s$ and minimal polynomial $g$, where $N=s\deg(g)$.
\begin{thm}
	The modules $M_1$ and $M_2$ are isomorphic in $\BassMod{g}{s}$ if and only if $m_1$ and $m_2$ are conjugate over the integers, that is, if there exists a matrix $U$ in $\GL_N(\Z)$ such that 
	\[ m_1=Um_2U^{-1}. \]
\end{thm}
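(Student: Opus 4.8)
The plan is to reduce the statement to elementary linear algebra over $\Z$, using the remark made just above the theorem that $M_1$ and $M_2$ are isomorphic as $R_g$-modules if and only if they are isomorphic as $\Z[\alpha_g]$-modules. I would first make that remark precise: every $R_g$-linear map is visibly $\Z[\alpha_g]$-linear, and conversely a $\Z[\alpha_g]$-linear map $f\colon M_1\to M_2$ automatically commutes with multiplication by $q^r/\alpha_g$, because from the identity $\alpha_g\cdot(q^r/\alpha_g)=q^r$ one gets $\alpha_g\, f\bigl((q^r/\alpha_g)x\bigr)=f(q^r x)=q^r f(x)=\alpha_g\,(q^r/\alpha_g)\, f(x)$, and $\alpha_g$ acts injectively on the torsion-free module $M_2$, so it may be cancelled. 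Hence $\Z[\alpha_g]$-linear isomorphisms between such modules are exactly the $R_g$-linear ones.

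Next I would use the dictionary between modules and matrices. Each $M_i$ is free of rank $N=s\deg(g)$ as a $\Z$-module, and a $\Z[\alpha_g]$-module structure on $\Z^N$ is nothing but a choice of the endomorphism ``multiplication by $\alpha_g$'', which in the fixed $\Z$-basis is exactly the matrix $m_i$. Therefore a $\Z[\alpha_g]$-linear isomorphism $M_2\to M_1$ is precisely a $\Z$-linear isomorphism $\Z^N\to\Z^N$, i.e.\ a matrix $U\in\GL_N(\Z)$, intertwining the two $\alpha_g$-actions, i.e.\ satisfying $Um_2=m_1U$, equivalently $m_1=Um_2U^{-1}$. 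Running this equivalence in both directions — a module isomorphism produces such a $U$, and such a $U$ produces a $\Z$-linear bijection commuting with the $\alpha_g$-action, hence (by the first step) an $R_g$-isomorphism — yields both implications.

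I do not expect a genuine obstacle here; the only point that needs a little care is the claim that a $\Z[\alpha_g]$-linear bijection between these modules is automatically $R_g$-linear, which the cancellation argument above handles. I would also remark that the hypotheses on the characteristic polynomial $g^s$ and minimal polynomial $g$ of $m_1,m_2$ are not used in the proof itself: they only record which integer matrices arise as the $\alpha_g$-action on an object of $\BassMod{g}{s}$, which is the input relevant for turning this criterion into an algorithm, conjugacy of integer matrices being decidable.
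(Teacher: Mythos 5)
Your proof is correct, and it is genuinely more direct than the paper's, which simply cites generalizations of the Latimer--MacDuffee correspondence (\cite{LaClMD33}, \cite[Theorem 8.1]{MarICM18}, \cite{HusertPhDThesis17}) without spelling out the argument. What you observe is that the specific theorem stated here needs only the ``easy'' half of that circle of ideas: a $\Z[\alpha_g]$-module structure on $\Z^N$ is the same as an integer $N\times N$ matrix, $\Z[\alpha_g]$-isomorphism is the same as $\GL_N(\Z)$-conjugacy of these matrices, and a $\Z[\alpha_g]$-linear bijection between torsion-free $R_g$-modules is automatically $R_g$-linear because $R_g=\Z[\alpha_g,q^r/\alpha_g]$ and one can cancel the injective action of $\alpha_g$ inside $K_g^s$. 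That last point is exactly what the paper glosses as ``since $\Z[\alpha_g]$ has finite index in $R_g$''; your cancellation argument is cleaner and in fact does not appeal to finiteness of the index at all. What the paper's citations buy, beyond the statement here, is the further correspondence between conjugacy classes of matrices with prescribed characteristic/minimal polynomial and isomorphism classes of modules (i.e.\ surjectivity of the matrix-to-module dictionary onto the relevant class set), which underlies the computational use of the result but is not needed for the ``iff'' as stated. Your closing remark that the conditions on the characteristic and minimal polynomials of $m_1,m_2$ are consequences of the setup rather than hypotheses used in the proof is also accurate.
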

\begin{proof}
	The theorem is a direct consequence of generalizations of \cite{LaClMD33}.
	Such generalizations can be found in \cite[Theorem 8.1]{MarICM18} and in \cite{HusertPhDThesis17}.
\end{proof}
	The algorithm described in \cite{EickHofmannOBrien19} allows us to test whether $m_1$ and $m_2$ are conjugate over $\Z$.
	Such an algorithm has the advantage of being very general, at the cost of being slower than the methods described above for the particular cases when $s=1$ or $R_g$ is Bass.

\subsection{Applications to abelian varieties}
The algorithms described above allow us to explicitly compute base field extensions of abelian varieties as we show in the next example.
\label{subsec:appl_abvar}
\begin{example}
\label{ex:ext}
   Consider the polynomial $h=x^6 - x^3 + 8$ corresponding to an isogeny class of ordinary abelian $3$-folds over $\F_2$.
   Denote by $K_h$ the number field $\Q[x]/(h)$ and by $\alpha_h$ the class of $x$ in $K_h$.
   It turns out that the order $R_h=\Z[\alpha_h,2/\alpha_h]$ is maximal and has Picard group of order $3$ generated by the prime ideal $\p_h=2R_h+\alpha_hR_h$.
   So in particular there are $3$ isomorphism classes of abelian varieties in this isogeny class, corresponding to
   $R_h$, $\p_h$ and $\p_h^2$.
   
   We now extend the isogeny class to the field $\F_{2^6}$, which means that we look at the polynomial 
   \[h_6= x^6 + 45x^5 + 867x^4 + 9135x^3 + 55488x^2 + 184320x + 262144.\]
   Observe that $h_6=g^3$, where
   \[ g= x^2 + 15x + 64. \]
   Put $K_g=\Q[x]/(g)$, $\alpha_g=x \mod g$ and $R_g=\Z[\alpha_g,64/\alpha_g]$.
   Note that $R_g$ is the maximal order of $K_g$ and that it has a Picard group of order $3$ generated by $\p_g=2R_g+\alpha_gR_g$.
   Since $R_g$ is Bass, using Theorem \ref{thm:bassdecom}, we can see that the isomorphism classes of abelian varieties in the isogeny class determined by $h_6$
   correspond to the direct sums in $\BassMod{g}{3}$
   \begin{align*}
      M_1=R_g\oplus R_g\oplus R_g, \\
      M_2=R_g\oplus R_g\oplus \p_g ,\\
      M_3=R_g\oplus R_g\oplus \p_g^2.
   \end{align*}
   Moreover, using the same notation as in Theorem \ref{thm:powers}, we can verify that
   \begin{align*}
	\cE_2(R_h) = M_1,\\
	\cE_2(\p_h) = M_2,\\
	\cE_2(\p_h^2) = M_3.
   \end{align*}
\end{example}

\section{Twists}
\label{sec:twists}
Recall that two abelian varieties $A$ and $A'$ over $\F_q$ are twists of each other if there exists some $r>1$ such that $A_r\simeq_{\F_{q^r}} A'_r$.
If this is the case we say that $A$ and $A'$ are $r$-twists.
We say that $A'$ is a trivial twist of $A$ if $A\simeq_{\F_q} A'$.

Assume now that $A$ and $A'$ are simple and ordinary.
A necessary condition for $A$ and $A'$ to be $r$-twists is $h_{A_r} = h_{A'_r}$.
For simplicity of exposition we assume that $A$ and $A'$ are isogenous, say both in $\AV(h)$.
See Remark \ref{rmk:non_isogenous} for an explanation about how to adapt the method descried to the general case.

\begin{prop}
\label{prop:twist}
Let $A$ and $A'$ be simple and ordinary  abelian varieties,  both in the isogeny class $\AV(h)$.
Let $r>1$ and write $h_r=g^s$, with $g$ irreducible.
\begin{enumerate}[(1)]
\item The abelian varieties \label{prop:twist:pow} $A_r$ and $A'_r$ are isomorphic if and only if $\cF_{h_r}(A_r)$ and $\cF_{h_r}(A'_r)$ are isomorphic in $\BassMod{g}{s}$.
\item \label{prop:twist:irr} Moreover, if $s=1$, that is, $h_r$ is irreducible then $A_r$ and $A'_r$ are isomorphic if and only if $A$ and $A'$ are.
\end{enumerate}
\end{prop}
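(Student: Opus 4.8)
The strategy is to reduce both claims to the fact that equivalences of categories, and more generally fully faithful functors, both preserve and reflect isomorphisms.

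For \ref{prop:twist:pow}: since $A$ and $A'$ lie in $\AV(h)$ they are ordinary with characteristic polynomial of Frobenius $h$, hence $A_r$ and $A'_r$ are ordinary with characteristic polynomial $h_r$, i.e.\ both lie in $\AV(h_r)$. The functor $\cF_{h_r}=\cG_{h_r}\circ\Ford$, restricted to $\AV(h_r)$, is the composite of the equivalence of Theorem \ref{thm:deligne} with the equivalence of Theorem \ref{thm:powers} (which applies since $h_r=g^s$ with $g$ irreducible, hence squarefree). An equivalence of categories reflects and preserves isomorphisms, so $A_r\simeq_{\F_{q^r}}A'_r$ if and only if $\cF_{h_r}(A_r)\simeq\cF_{h_r}(A'_r)$ in $\BassMod{g}{s}$, which is \ref{prop:twist:pow}.

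For \ref{prop:twist:irr}, one implication is immediate: base extending an isomorphism defined over $\F_q$ produces an isomorphism over $\F_{q^r}$, so $A\simeq_{\F_q}A'$ implies $A_r\simeq_{\F_{q^r}}A'_r$. For the converse I would unwind the functor $\cE_2$ when $s=1$. In that case $h_r=g$, the polynomial $l\in K_g[y]$ of Section \ref{sec:uptoiso} is linear, and the maps $\vphi$ and $\psi$ there combine into an isomorphism of $\Q$-algebras $\vphi\colon K_g\xrightarrow{\sim}K_h$ determined by $\alpha_g\mapsto\alpha_h^r$ (surjectivity being forced by $\deg g=\deg h$). Under $\vphi$ the order $R_g=\Z[\alpha_g,q^r/\alpha_g]$ is carried into $R_h=\Z[\alpha_h,q/\alpha_h]$, because $\alpha_h^r$ and $(q/\alpha_h)^r$ lie in $R_h$. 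Hence $\cE_2$ is identified with the functor sending a fractional $R_h$-ideal $I\subset K_h$ to the fractional $R_g$-ideal $\vphi^{-1}(I)\subset K_g$, and sending a morphism $b\in\Hom_{R_h}(I,I')=\{b\in K_h:bI\subseteq I'\}$ to $\vphi^{-1}(b)$; applying $\vphi$ shows this is a bijection onto $\Hom_{R_g}(\vphi^{-1}(I),\vphi^{-1}(I'))=\{c\in K_g:c\,\vphi^{-1}(I)\subseteq\vphi^{-1}(I')\}$. Thus $\cE_2$ is fully faithful when $s=1$, and in particular reflects isomorphisms.

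To conclude \ref{prop:twist:irr}, put $I=\cF_h(A)$ and $I'=\cF_h(A')$; these are fractional $R_h$-ideals because $\cF_h$ is an equivalence $\AV(h)\to\BassMod{h}{1}$. By the commutative diagram of Theorem \ref{thm:functors} we have $\cF_{h_r}(A_r)=\cE_2(I)$ and $\cF_{h_r}(A'_r)=\cE_2(I')$, so combining \ref{prop:twist:pow} with the full faithfulness of $\cE_2$ and with $\cF_h$ being an equivalence yields
\[ A_r\simeq_{\F_{q^r}}A'_r\iff\cE_2(I)\simeq\cE_2(I')\iff I\simeq I'\iff A\simeq_{\F_q}A'. \]
I expect the only genuinely non-formal point to be the identification of $\cE_2$ with ``apply $\vphi^{-1}$'' in the case $s=1$: one must check carefully that $\vphi$ is a ring isomorphism $K_g\to K_h$ and that $R_g$-linear and $R_h$-linear maps between the relevant fractional ideals correspond under it. Everything else is a routine chase of reflection-of-isomorphisms through the equivalences of Theorems \ref{thm:deligne}, \ref{thm:powers} and \ref{thm:functors}.
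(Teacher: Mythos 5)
Your proof is correct and follows essentially the same route as the paper: part (1) is the observation that $\cF_{h_r}$ is an equivalence (the paper cites Theorem \ref{thm:functors}, implicitly together with Theorems \ref{thm:deligne} and \ref{thm:powers}), and part (2) is the observation that when $s=1$ the inclusion $K_g\hookrightarrow K_h$, $\alpha_g\mapsto\alpha_h^r$, is a field isomorphism, which makes $\cE_2$ fully faithful. You spell out the verification that $\cE_2$ is identified with $\vphi^{-1}$ and is fully faithful in somewhat more detail than the paper, which simply asserts it; this is a reasonable amount of extra care, not a different argument.
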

\begin{proof}
Part \ref{prop:twist:pow} follows from Theorem \ref{thm:functors}.
To prove \ref{prop:twist:irr} observe that if $h_r=g$ then
the inclusion $K_g\hookrightarrow K_h$ given by $\alpha_g\mapsto \alpha_h^r$ (discussed in Section \ref{sec:uptoiso}) is an isomorphism.
This implies that the functor $\cE_2:\BassMod{h}{1}\to \BassMod{h_r}{1}$ from Theorem \ref{thm:functors} is fully faithful and hence
\[ \Hom_{\F_q}(A,A')\simeq\Hom_{\F_{q^r}}(A_r,A'_r). \]
\end{proof}

We can use the results contained in Subsection \ref{subsec:isom_general} in order to test the isomorphisms of Proposition \ref{prop:twist}, that is, we have an algorithm to test whether two abelian varieties given as modules in the appropriate category $\BassMod{h}{1}$ are $r$-twists, for a fixed $r>0$.
The implementation of such an algorithm is available on the author's webpage.

\begin{remark}
\label{rmk:non_isogenous}
The assumption that $A$ and $A'$ are isogenous is made to simplify the exposition.
If $A$ and $A'$ are $r$-twists but not necessarily isogenous, say $A\in \AV(h)$ and $A'\in \AV(h')$ with $h_r=h'_r$, then we can still use the theory developed in the previous sections to explicitly compute the corresponding modules and isomorphisms, but we will have to work with the two functors $\cF_{h_r}:\AV(h_r)\to \BassMod{g}{s}$ and  $\cF_{h'_r}:\AV(h'_r)\to \BassMod{g}{s}$.
The implementation of our algorithms includes this case and it is demonstrated in Example \ref{ex:coh}.
\end{remark}

In the reminder of this section we give examples of concrete computations.

\begin{example}
\label{ex:Bass}
	Let $h=x^4 - 205x^2 + 103^2$ and consider the isogeny class of ordinary simple abelian surfaces $\AV(h)$ over $\F_{103}$.
	In $\AV(h)$ there are $12$ isomorphism classes which are represented by the fractional $R_h$-ideals $$\p_3^i\p_5^j$$ for $i=0,1$ and $j=0,\ldots,5$, where $\p_3$ is the unique prime of $R_h$ above $3$ and $\p_5=(5,1+\alpha_h)$.
	Observe that $h_2=g^2$, where $g=x^2 - 205x + 103^2$.
	Moreover, by looking at the square roots of the roots of $h_2$ one can easily verify that there is no $103$-Weil polynomial other than $h$ whose extension gives $h_2$.
	The order $R_g$ is maximal and has cyclic Picard group of order $6$, generated by the class of $\frP=(5,-1+\alpha_g)$.
	In particular the objects of $\BassMod{g}{2}$ can be represented by
	\[  R_g \oplus \frP^k, \text{ for } k=0,\ldots,5. \]
	Using the methods described in Subsection \ref{subsec:isom_general} we compute
	\[ \cE_2(\p_5^j)\simeq\cE_2(\p_3\p_5^j)\simeq R_g \oplus \frP^j \text{ for } j=0,\ldots,5. \]	
	This tells us that for each $j=0,\ldots,5$ the only non-trivial $2$-twist of the abelian variety corresponding to $\p_5^j$ is the abelian variety corresponding to $\p_3\p_5^j$.
	In other words, if we denote by $A_{i,j}$ the abelian variety such that $\cF_h(A_{i,j})=\p_3^i\p_5^j$ then we have 
	\[ A_{0,j}\otimes_{\F_{103}}\F_{103^2}\simeq A_{1,j}\otimes_{\F_{103}}\F_{103^2}. \]
\end{example}

\begin{example}
\label{ex:notBass}
Let $h=x^4 - 18x^2 + 169$ and $g=x^2 - 18x + 169$.
The isogeny class $\AV(h)$ of abelian surfaces over $\F_{13}$ extends to the isogeny class $\AV(g^2)$.
By looking at the complex roots of the polynomial $g$, one can easily check that $h$ is the only $13$-Weil polynomial for which this happens.
The order $R_h$ is not Bass and it has $3$ proper overorders:
\begin{align*}
S & :=R_h+\left( \frac{1+\alpha_h^2}{2}+\frac{\alpha_h^2-5}{26}\alpha_h \right)R_h,\\
T & :=R_h+\frac{\alpha_h^2-5}{26}\alpha_hR_h,
\end{align*}
and the maximal order $\cO_{K_h}$.
Among these orders, the only non-Gorenstein one is $S$.
One computes using the methods described in Subsection \ref{subsec:isom_irred_case} that the $12$ isomorphism classes of $\BassMod{h}{1}$ are represented by
the following set of ideals: 
\[ \set{R_h, I, I^2, I^3, S, IS, S^t, IS^t, T, IT, \cO_{K_h}, I\cO_{K_h}}, \]
where $I$ is a generator of $\Pic(R_h)\simeq \Z/4\Z$ and $S^t$ is the trace dual ideal of $S$, that is, 
\[ S^t=\set{ z \in K_h : \Tr_{K_h/\Q}(zS)\subseteq \Z }. \]
The order $R_g$ is Bass and has only one proper overorder, the maximal order $\cO_{K_g}$.
Observe that by \cite[Corollary 4.3]{MarBassPow} we have that each abelian variety in $\AV(g^2)$ is isomorphic to a product of isogenous elliptic curves.
We have $\Pic(R_g)\simeq\Z/2\Z\times \Z/2\Z$.
We denote by $\frI$ the generator isomorphic to any of the two prime ideals above $47$ and by $\frA$ the other generator.
The isomorphism classes of modules in $\BassMod{g}{s}$ are represented by
\begin{align*}
& M_1=R_g \oplus R_g ,			& & M_2=R_g \oplus \frI,\\
& M_3=R_g \oplus \frA ,		& & M_4=R_g \oplus \frI \frA,\\
& M_5=R_g \oplus \cO_{K_g} ,		& & M_6=R_g \oplus \frI\cO_{K_g}, \\
& M_7=\cO_{K_g} \oplus \cO_{K_g},& & M_8=\cO_{K_g} \oplus \frI\cO_{K_g}.
\end{align*}

We compute that the following isomorphisms of $R_g$-modules hold
\begin{align*}
M_2 \simeq \cE_2(I) \simeq \cE_2(I^3),     		& & M_3 \simeq \cE_2(R_h) \simeq \cE_2(I^2),  \\
M_5 \simeq \cE_2(IS^t) \simeq \cE_2(IS),			& &  M_6 \simeq \cE_2(S^t) \simeq \cE_2(S), \\
M_7 \simeq \cE_2(IT) \simeq \cE_2(\cO_{K_h}),	& &  M_8 \simeq \cE_2(T) \simeq \cE_2(I\cO_{K_h}),
\end{align*}
which allows us to identify the $2$-twists in the isogeny class $\AV(h)$.
\end{example}
\begin{remark}
In Example \ref{ex:notBass} we notice several interesting behaviours.
Since the order $S$ is the unique non-Gorenstein overorder of $R_g$ we deduce that it is must be CM-conjugate stable, that is, $\overline{S}=S$ and in particular, we have that $\overline{S^t}=S^t$ is not isomorphic to $S$.
This tells us that the abelian variety corresponding to $S$ is not isomorphic to its own dual, which corresponds to $\overline{S^t}$ and in particular, it is not principally polarizable.
But the extension $\cE_2(S)\simeq M_6$ corresponds to a product of two elliptic curves which has the product principal polarization. 

Moreover, since $S$ is not Gorenstein, $S^t$ and $S$ are not even weakly equivalent, that is, there exist a prime $\p$ of $S$ such that $S^t_\p\not\simeq S_\p$.
The notion of weak equivalence was introduced in \cite{dadetz62} and in \cite{MarICM18} we give effective algorithms to check whether two fractional ideals are weakly equivalent.
Since $\cE_2(S)\simeq \cE_2(S^t)$ we deduce that the weak equivalence class of an abelian variety does not correspond to a geometric invariant of the corresponding abelian varieties.

Also, the abelian varieties corresponding to $IT$ and $\cO_{K_h}$ which have respectively endomorphism rings isomorphic to $T$ and $\cO_{K_h}$ are $2$-twists.
\end{remark}

\section{Galois cohomology}
\label{sec:gal_coh}
Put $K=\bar{\F}_p$ and $G=\Gal(K/\F_q)$.
Write $\Fr$ for the Frobenius element of $G$.
Let $A$ be an abelian variety over $\F_q$ and put $A_K:=A\otimes K$.
Observe that $\Fr$ acts on $\Aut_K(A)$ by the following rule:
given $\tau \in \Aut_K(A)$ write $\tensor[^{\Fr}]{\tau}{}$ for the twisted automorphism
of $A_K$ defined by
\[ \tensor[^{\Fr}]{\tau}{}= (\id_A \otimes \Fr) \circ \tau \circ (\id_A\otimes \Fr^{-1}). \]
Such an action turns $\Aut_K(A)$ into a topological $G$-module.
Recall that a \emph{cocycle} of $G$ with values in $\Aut_K(A)$ is a $G$-linear map $\varepsilon:G\to \Aut_K(A)$ such that 
\[ \varepsilon( g_1g_2 ) = \varepsilon(g_1)\left( \tensor[^{g_2}]{\varepsilon(g_1)}{} \right), \]
for every $g_1,g_2\in G$.
We denote by $Z^1(G,\Aut_K(A))$ the set of cocycles of $G$ with values in $\Aut_K(A)$.
We say that $\varepsilon_1, \varepsilon_2 \in Z^1(G,\Aut_K(A))$ are \emph{cohomologous} if there exists $ \sigma \in \Aut_K(A) $ such that
\[ \varepsilon_1(g)= \sigma \varepsilon_2(g) \sigma^{-1}, \]
for every $g\in G$.
Observe that being cohomologous defines an equivalence relation on $Z^1(G,\Aut_K(A))$.
The corresponding set of equivalence classes is denoted $ H^1(G,\Aut_K(A)) $.

Denote by $\Theta(A/\F_q)$ the set of 
$\F_q$-isomorphism classes of twists $A'$ (over $\F_q$) of $A$.
The class of $A'$ in $\Theta(A/\F_q)$ is represented by a geometric isomorphism $\phi:A_K\to A'_K$.
Given such $\phi:A_K\to A'_K$ define the map $\varepsilon_\phi:G_{\F_q}\to \Aut_K(A)$ by
\[ \varepsilon_\phi: \alpha \mapsto \phi^{-1}\circ \tensor[^\alpha]{\phi}{}. \]
It is an easy verification that $\varepsilon_\phi \in Z^1(G,\Aut_K(A))$.

Two automorphisms $\tau_1,\tau_2\in \Aut_K(A)$ are called 
\emph{$\Fr$-conjugate}
if there exists $\sigma \in \Aut_K(A)$ such that 
\[ \tau_1= \sigma^{-1}\tau_2(\tensor[^{\Fr}]{\sigma}{}). \]
Being $\Fr$-conjugate is an equivalence relation.
Let $\cS$ be the set automorphisms $\tau \in \Aut_K(A)$ such that there exists $n$ for which 
\begin{enumerate}[(i)]
 	\item \label{def_over_finext} $\tensor[^{\Fr^n}]{\tau}{}=\tau$, and
 	\item $(\tau\cdot\tensor[^{\Fr}]{\tau}{}\cdots \tensor[^{\Fr^{n-1}}]{\tau}{})$ has finite order.
\end{enumerate}
Observe that \ref{def_over_finext} is equivalent to saying that $\tau$ lies in $\Aut_{\F_{q^n}}(A)$ and that the set $\cS$ contains $\Tors(\Aut_K(A))$.
Denote by $\bcS$ the set of $\Fr$-conjugacy classes of elements of $\cS$.

\begin{prop}
\label{prop:coh_bij}
The maps $ \phi \mapsto \varepsilon_\phi$ and $ \varepsilon \mapsto \varepsilon(\Fr_q) $ yield bijections:
\[  \Theta(A,\F_q) \rightarrow H^1(G_{\F_q},\Aut_K(A)) \rightarrow \bcS.\]
\end{prop}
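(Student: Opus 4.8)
The plan is to establish the two bijections separately and in the classical way, then note that everything is compatible.

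\medskip

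\textbf{Step 1: $\Theta(A/\F_q)\to H^1(G,\Aut_K(A))$.} This is the standard descent/twisting correspondence. First I would check that $\varepsilon_\phi$ is well defined on $\F_q$-isomorphism classes: if $\psi:A_K\to A'_K$ is another geometric isomorphism realizing the same twist, then $\psi = (\rho\otimes K)\circ\phi$ for an $\F_q$-isomorphism $\rho:A'\to A'$ (more precisely $\psi\circ\phi^{-1}$ descends to $\F_q$), and a short computation shows $\varepsilon_\psi$ is cohomologous to $\varepsilon_\phi$. Conversely, if $\varepsilon_\phi$ and $\varepsilon_{\phi'}$ are cohomologous via $\sigma\in\Aut_K(A)$, then $\phi'\circ\sigma\circ\phi^{-1}$ is $\Gal$-invariant, hence descends to an $\F_q$-isomorphism $A'\simeq A''$, so the source classes agree. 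Surjectivity is Weil's descent: given a cocycle $\varepsilon$, one twists the $\F_q$-structure on $A_K$ by $\varepsilon$ and the resulting Galois-stable structure is effective because $A_K$ is quasi-projective (polarizations descend), producing an $\F_q$-form $A'$ with $\varepsilon_\phi\sim\varepsilon$; injectivity is the computation just sketched. I would cite the standard reference (e.g. Serre, or Silverman's treatment for elliptic curves generalizes verbatim) rather than reprove descent.

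\medskip

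\textbf{Step 2: $H^1(G,\Aut_K(A))\to \bcS$ via $\varepsilon\mapsto\varepsilon(\Fr)$.} Since $G=\widehat{\Z}$ is procyclic generated (topologically) by $\Fr$, a continuous cocycle $\varepsilon$ is determined by $\tau:=\varepsilon(\Fr)$, and the cocycle condition forces $\varepsilon(\Fr^n)=\tau\cdot{}^{\Fr}\tau\cdots{}^{\Fr^{n-1}}\tau$; continuity means this stabilizes, i.e. $\varepsilon$ factors through a finite quotient $\Gal(\F_{q^n}/\F_q)$, which translates exactly into conditions \ref{def_over_finext} and (ii) defining $\cS$ — indeed $\tau\in\Aut_{\F_{q^n}}(A)$ (automorphism groups of abelian varieties are finitely generated, so the order-finiteness in (ii) is the condition that $\varepsilon(\Fr^n)=\id$ for some multiple, giving a genuine finite cocycle). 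One checks conversely that every $\tau\in\cS$ arises this way: define $\varepsilon(\Fr^k)$ by the partial products and verify periodicity using (i), (ii). Then two cocycles $\varepsilon_1,\varepsilon_2$ are cohomologous (via $\sigma$) iff $\varepsilon_1(g)=\sigma\varepsilon_2(g)\sigma^{-1}$ for all $g$, and evaluating at $\Fr$ — using ${}^{\Fr}(\sigma^{-1})={}^{\Fr}\sigma{}^{-1}$ and the cocycle relation — gives $\tau_1=\sigma^{-1}\tau_2\,({}^{\Fr}\sigma)$ after renaming, which is precisely $\Fr$-conjugacy; conversely $\Fr$-conjugate elements extend to cohomologous cocycles. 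So the evaluation map descends to a bijection on equivalence classes.

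\medskip

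\textbf{Main obstacle.} The routine part is the cocycle bookkeeping in Step 2; the only genuinely substantive point is the finiteness/continuity matching in $\cS$ — namely that the abstract "cocycle with values in $\Aut_K(A)$" must be \emph{continuous} (locally constant), which is why condition (ii) imposes that the product $\tau\cdot{}^{\Fr}\tau\cdots{}^{\Fr^{n-1}}\tau$ have \emph{finite order} rather than merely being $\Fr^n$-invariant: a continuous cocycle on $\widehat{\Z}$ with values in a discrete group has finite image, so $\varepsilon(\Fr^n)$ must be a torsion element (and in fact, enlarging $n$, equal to $\id$). Verifying that the two descriptions of $\cS$ match — "$\tau$ defined over some $\F_{q^n}$ with the $n$-fold twisted product of finite order" versus "$\tau$ gives a continuous cocycle" — is the one place requiring care, and I would spell out the equivalence explicitly, observing as the paper does that $\Tors(\Aut_K(A))\subseteq\cS$ so the correspondence is nonempty and contains all the obvious twists. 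Finally I would remark that the composite $\Theta(A/\F_q)\to\bcS$ sends the class of $\phi:A_K\to A'_K$ to the $\Fr$-conjugacy class of $\phi^{-1}\circ{}^{\Fr}\phi$, which is the formula used in the computational sections.
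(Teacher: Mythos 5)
Your proof is correct and takes essentially the same route as the paper, which simply cites Serre's \emph{Galois Cohomology} for both bijections (Ch.\,III\,\S1.3 Prop.\,5 for $\Theta(A,\F_q)\to H^1$ via descent, and Ch.\,I\,\S5.1 for $H^1\to\bcS$ on a procyclic group). The details you supply, notably the translation of cocycle continuity into conditions (i) and (ii) defining $\cS$, are exactly the content of those citations, and you correctly identify that matching as the one subtle point.
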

\begin{proof}
By \cite[Ch.III §1.3 Prop.5]{SerreGaloisCohomology} the map $\phi \mapsto \varepsilon_\phi$ induces a bijection
\[ \Theta(A,\F_q) \rightarrow H^1(G_{\F_q},\Aut_K(A)).
\]
Moreover, by \cite[Ch.I §5.1]{SerreGaloisCohomology}, the map $\varepsilon \mapsto \varepsilon(\Fr)$ induces a bijection
\[ H^1(G_{\F_q},\Aut_K(A)) \rightarrow \bcS. \]
\end{proof}
\begin{remark}
Compare Proposition \ref{prop:coh_bij} with \cite[Prop.~3.5]{KaremakerPries19} and \cite[Prop.~5 and Prop.~9]{MeagherTop10}, where there are analogous results for principally polarized abelian varieties and curves over finite fields, respectively.
The main difference with our result is that since we consider unpolarized abelian varieties the automorphism groups are infinite if $\dim(A)>1$.
\end{remark}

\begin{cor}
\label{cor:coh_bij}
Let $A$ be an abelian variety over $\F_q$ such that $\Aut_K(A)=\Aut_{\F_q}(A)$.
Let $\tau \in \Tors(\Aut_{\F_q}(A))$.
Assume that $\tau$ lies in the center of $\Aut_K(A)$ and has order $r$.
Then there exists a twist $\phi:A_r \to A'_r$ such that if we denote by $\pi
$ and $\pi'$ the Frobenius endomorphisms of $A$ and $A'$ respectively, we have
\begin{equation}
\label{cor:coh_bij:eq1}
 \phi^{-1} \circ \pi' \circ \phi = \pi \circ \tau^{-1}.
\end{equation}
In particular $\pi \circ \tau^{-1}$ and $\pi'$ have the same characteristic polynomial.
\end{cor}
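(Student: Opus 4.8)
The plan is to feed the central torsion automorphism $\tau$ into the bijection of Proposition~\ref{prop:coh_bij} and then read off the Frobenius from the resulting cocycle. First I would note that since $\tau$ has finite order $r$ and lies in $\Aut_{\F_q}(A)=\Aut_K(A)$, we have $\tensor[^{\Fr}]{\tau}{}=\tau$, so condition~\ref{def_over_finext} holds with $n=1$; moreover $\tau\cdot\tensor[^{\Fr}]{\tau}{}\cdots\tensor[^{\Fr^{r-1}}]{\tau}{}=\tau^r=\id$ has finite order, so $\tau\in\cS$. Define a cocycle $\varepsilon\in Z^1(G_{\F_q},\Aut_K(A))$ by $\varepsilon(\Fr^k)=\tau^k$; this is a genuine cocycle precisely because $\tau$ is $\Fr$-fixed (the cocycle condition $\varepsilon(g_1g_2)=\varepsilon(g_1)\,\tensor[^{g_2}]{\varepsilon(g_1)}{}$ reduces to $\tau^{k+l}=\tau^k\cdot\tau^l$). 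Here the exponent $r$ of $\tau$ matches the $r$ in the statement: since $\varepsilon$ factors through $\Gal(\F_{q^r}/\F_q)$, the corresponding twist $\phi:A_K\to A'_K$ is in fact defined over $\F_{q^r}$, i.e.\ it gives a genuine isomorphism $\phi:A_r\to A'_r$ of abelian varieties over $\F_{q^r}$ (this is where $A_r$, rather than $A$, enters).

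Next, by Proposition~\ref{prop:coh_bij} there is a twist $\phi$ with $\varepsilon_\phi=\varepsilon$, i.e.\ $\phi^{-1}\circ\tensor[^{\Fr}]{\phi}{}=\tau$. The task is then to turn this cocycle relation into the relation~\eqref{cor:coh_bij:eq1} between Frobenius endomorphisms. I would argue as follows: the Frobenius endomorphism $\pi$ of $A$ equals, on $A_K$, the composite of $\id_A\otimes\Fr$ with the $q$-power Frobenius acting on the structure sheaf; more usefully, for any abelian variety over $\F_q$ the arithmetic Frobenius $\id\otimes\Fr$ and the Frobenius endomorphism $\pi$ agree as self-maps of $A_K$ after identifying $A_K$ with $A_K$ via $\id\otimes\Fr$, so $\pi'=\phi\circ(\id_A\otimes\Fr)\circ\phi^{-1}\circ(\id_{A'}\otimes\Fr)^{-1}\circ\pi'$ unwinds to $\phi^{-1}\circ\pi'\circ\phi=(\phi^{-1}\circ\tensor[^{\Fr}]{\phi}{})^{-1}\circ\pi=\tau^{-1}\circ\pi$. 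Since $\tau$ is central in $\Aut_K(A)$ and $\pi\in\End_{\F_q}(A)\subseteq\Aut_K(A)\otimes\Q$ commutes with it, $\tau^{-1}\circ\pi=\pi\circ\tau^{-1}$, giving~\eqref{cor:coh_bij:eq1}.

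Finally, the statement about characteristic polynomials is immediate: \eqref{cor:coh_bij:eq1} exhibits $\pi\circ\tau^{-1}$ and $\pi'$ as conjugate elements of $\End_{\F_{q^r}}(A_r)\otimes\Q$ via $\phi$, hence they have the same characteristic polynomial acting on the Tate module (equivalently, the same minimal polynomial over $\Q$ and the same action on $H_1$), so $h_{\pi\tau^{-1}}=h_{\pi'}=h_{A'_r}$.

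I expect the main obstacle to be the careful bookkeeping in the middle step: writing down precisely how the Frobenius \emph{endomorphism} $\pi$ relates to the arithmetic Frobenius $\id\otimes\Fr$ acting on $A_K$, and checking that the twisting operation $\phi\mapsto\tensor[^{\Fr}]{\phi}{}$ interacts with this the way the cocycle formalism predicts. The centrality hypothesis on $\tau$ is used only at the very end to move $\tau^{-1}$ past $\pi$; everything else is a direct transcription through Proposition~\ref{prop:coh_bij}.
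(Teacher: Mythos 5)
Your proof is correct and follows essentially the same route as the paper: feed the central torsion element $\tau$ into the bijection of Proposition~\ref{prop:coh_bij}, observe that the associated cocycle factors through $\Gal(\F_{q^r}/\F_q)$ so the twist is defined over $\F_{q^r}$, and then unwind the cocycle relation into the Frobenius identity~\eqref{cor:coh_bij:eq1}. The only difference is that where you sketch the two middle steps directly (the descent of $\phi$ to $\F_{q^r}$ and the translation from $\varepsilon_\phi(\Fr)=\tau$ to $\phi^{-1}\circ\pi'\circ\phi=\pi\circ\tau^{-1}$), the paper simply cites \cite[Rmk.~3.7]{KaremakerPries19} and \cite[Prop.~3.9]{KaremakerPries19}; also note that the commutativity $\pi\tau=\tau\pi$ follows already from $\pi$ being central in $\End_{\F_q}(A)$, so the centrality hypothesis on $\tau$ is not really what is needed at that particular step.
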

\begin{proof}
Since all automorphisms are defined over the base field, $\Fr$-conjugacy coincides with usual conjugacy.
Moreover the conjugacy class of $\tau$ contains only $\tau$.
By the bijections described in Proposition \ref{prop:coh_bij}, the automorphism $\tau$ defines a twist $\phi:A_K \to A'_K$.
Since $\tau$ is defined over $\F_q$ then for every positive integer $n$ we have
\[ \tau \cdot \tensor[^\Fr]{\tau}{} \cdot \tensor[^{\Fr^2}]{\tau}{} \cdots \tensor[^{\Fr^{n-1}}]{\tau}{} = \tau^n \]
and hence by \cite[Rmk.~3.7]{KaremakerPries19} the twist $\phi:A_K\to A'_K$ is defined over $\F_{q^r}$.
Moreover, by \cite[Prop.~3.9]{KaremakerPries19} the twist $\phi:A_r\to A'_r$ satisfies
\[ \phi^{-1} \circ \pi' \circ \phi = \pi \circ \tau^{-1}, \]
as required.
\end{proof}

\begin{cor}
\label{cor:gal_coh_effective}
Let $A$ be a squarefree ordinary abelian variety over $\F_q$.
If the simple isogeny factors of $A$ are absolutely simple, then 
the association
\begin{align*}
\Tors(\Aut_{\F_q}(A)) & \longrightarrow \Theta(A,\F_q)\\
\tau & \longmapsto \phi
\end{align*}
of Corollary \ref{cor:coh_bij} is a bijection.
\end{cor}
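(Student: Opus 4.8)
The plan is to verify the two hypotheses of Corollary~\ref{cor:coh_bij} for $A$ --- that $\Aut_K(A)=\Aut_{\F_q}(A)$ and that torsion automorphisms are central --- so that the construction there gives a well-defined map $\tau\mapsto[\phi]$ from $\Tors(\Aut_{\F_q}(A))$ to $\Theta(A,\F_q)$, and then to identify $\Theta(A,\F_q)$ with $\Tors(\Aut_{\F_q}(A))$ by unwinding Proposition~\ref{prop:coh_bij}. First I would dispose of the easy half: writing $A\sim_{\F_q}B_1\times\cdots\times B_t$ with the $B_i$ simple and pairwise non-isogenous over $\F_q$ (which is what ``squarefree'' means), each $B_i$ is ordinary so $\End^0_{\F_q}(B_i)=\Q(\pi_i)$ is a CM field of degree $2\dim B_i$, where $\pi_i$ is the Frobenius of $B_i$, and there are no nonzero homomorphisms between distinct factors; hence $\End^0_{\F_q}(A)=\prod_i\Q(\pi_i)$ is commutative, $\Aut_{\F_q}(A)=\End_{\F_q}(A)^\times$ is abelian, and every torsion automorphism is automatically central.

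The substantive step, and the one I expect to be the main obstacle, is the equality $\Aut_K(A)=\Aut_{\F_q}(A)$ --- equivalently, that endomorphisms do not grow under base change --- which is exactly where the absolute simplicity of the $B_i$ is needed. By Proposition~\ref{prop:charpolyext}, $B_i$ absolutely simple means $\Q(\pi_i^r)=\Q(\pi_i)$ for all $r\ge1$. Under Deligne's equivalence (Theorem~\ref{thm:deligne}) the module attached to $B_i$ is, by Theorem~\ref{thm:powers}, a rank-one fractional $\Z[\pi_i,q/\pi_i]$-ideal inside $\Q(\pi_i)$, and base extension to $\F_{q^r}$ replaces the Frobenius $F_i$ by $F_i^r$ (Theorem~\ref{thm:functors}); since $\Q(\pi_i^r)=\Q(\pi_i)$, the same lattice is a rank-one fractional $\Z[\pi_i^r,q^r/\pi_i^r]$-ideal in the very same field, with \emph{the same} multiplicator ring as a subset of $\Q(\pi_i)$. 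Therefore $\End_{\F_{q^r}}(B_i\otimes\F_{q^r})=\End_{\F_q}(B_i)$ for every $r$, hence $\End_K(B_i)=\End_{\F_q}(B_i)$ in the limit. Provided the factors remain pairwise non-isogenous over $K$ --- so that no new off-diagonal homomorphisms appear geometrically --- this upgrades to $\End_K(A)=\End_{\F_q}(A)$ and thus $\Aut_K(A)=\Aut_{\F_q}(A)$. (The delicate point hiding here is precisely that two geometrically distinct simple factors should not fuse into one isogeny class over $K$; when that happens $\End_K(A)$ is genuinely larger, and the final step has to be carried out directly on the set $\bcS$ of Proposition~\ref{prop:coh_bij}, decomposing $\Aut_K(A)$ along the geometric isotypic components, rather than by quoting Corollary~\ref{cor:coh_bij} as a black box.)

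Granting the two hypotheses, the rest is bookkeeping. Corollary~\ref{cor:coh_bij} attaches to each $\tau\in\Tors(\Aut_{\F_q}(A))$ of order $r$ a twist $\phi\colon A_r\to A'_r$ over $\F_{q^r}$, and $A'$ is an abelian variety over $\F_q$ with $A'_K\simeq A_K$, so $[\phi]\in\Theta(A,\F_q)$; this is the association we must prove bijective. Feeding $\Aut_K(A)=\Aut_{\F_q}(A)$ into Proposition~\ref{prop:coh_bij}, the $\Fr$-action on $\Aut_K(A)$ is trivial, so $\Fr$-conjugacy is ordinary conjugacy, hence trivial on the abelian group $\Aut_{\F_q}(A)$, giving $\bcS=\cS$; and for $\tau\in\Aut_{\F_q}(A)$ the two conditions defining $\cS$ hold for some $n$ exactly when $\tau$ has finite order, so $\cS=\Tors(\Aut_{\F_q}(A))$. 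Thus the composite of Proposition~\ref{prop:coh_bij} is a bijection $\Theta(A,\F_q)\to\Tors(\Aut_{\F_q}(A))$, $[\phi]\mapsto\varepsilon_\phi(\Fr)$. It remains to note that the twist produced by Corollary~\ref{cor:coh_bij} has cocycle $\varepsilon_\phi$ with $\varepsilon_\phi(\Fr)=\tau$: this is \cite[Prop.~3.9]{KaremakerPries19}, already used in the proof of that corollary, since the relation $\phi^{-1}\circ\pi'\circ\phi=\pi\circ\tau^{-1}$ pins down exactly that twist. Hence $\tau\mapsto[\phi]$ is a section of the bijection just exhibited and therefore equals its inverse, which is the claim.
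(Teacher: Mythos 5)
Your proof follows the paper's approach closely: verify that the hypotheses of Corollary~\ref{cor:coh_bij} hold, then unwind the chain of bijections in Proposition~\ref{prop:coh_bij}. The paper's own proof is terser (it simply asserts that the hypotheses imply $\End_{\F_q}(A)=\End_K(A)$ and that $\bcS=\Tors(\Aut_{\F_q}(A))$, then quotes the two results), whereas you supply the module-theoretic reason why $\End_{\F_q}(B_i)=\End_K(B_i)$ for each absolutely simple factor $B_i$, and the reason why $\End^0_{\F_q}(A)$ is commutative. That extra detail is correct and welcome.

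However, the concern you raise in your parenthetical is not merely ``a delicate point hiding here'' --- it is a genuine gap, and it is present in the paper's own proof as well. The step $\End_{\F_q}(A)=\End_K(A)$ does \emph{not} follow from ``squarefree with absolutely simple factors.'' Concretely, take $E_1, E_2$ over $\F_2$ with $h_{E_1}=x^2-x+2$ and $h_{E_2}=x^2+x+2$; both are ordinary and absolutely simple (being elliptic curves), and $A=E_1\times E_2$ is squarefree. But the Frobenius eigenvalues of $E_1$ and $E_2$ square to the same pair $(-3\pm\sqrt{-7})/2$, so $E_1\otimes\F_4$ and $E_2\otimes\F_4$ are isogenous --- in fact isomorphic, since the common endomorphism ring is the maximal order of $\Q(\sqrt{-7})$, a PID. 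Hence $\End_K(A)\simeq M_2(\cO_{\Q(\sqrt{-7})})$ strictly contains $\End_{\F_2}(A)\simeq\cO\times\cO$. Moreover the conclusion of the corollary fails here: $\Tors(\Aut_{\F_2}(A))=\{\pm1\}\times\{\pm1\}$ has four elements, but the element $(-1,-1)=-\id_A$ induces the \emph{trivial} twist (the quadratic twist of $A$ is $E_2\times E_1\simeq A$), so the map to $\Theta(A,\F_q)$ is not injective. Equivalently, in $\bcS$ the elements $\id_A$ and $-\id_A$ become $\Fr$-conjugate via $\sigma=\left(\begin{smallmatrix}0&1\\1&0\end{smallmatrix}\right)$, since $\Fr$ acts on $\Aut_K(A)\simeq\GL_2(\cO)$ by conjugation by $\Diag(1,-1)$. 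Thus your proposed fallback of ``decomposing $\Aut_K(A)$ along geometric isotypic components'' cannot rescue the bijection; the statement actually needs the additional hypothesis that the simple factors of $A$ remain pairwise non-isogenous over $\bar{\F}_q$. When $A$ is simple (the main case in the rest of the paper) this is automatic, and then your argument and the paper's both go through; but as written for squarefree $A$ the corollary, and both proofs, contain the gap you correctly flagged.
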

\begin{proof}
 By the hypothesis on $A$ we have that $\Aut_{\F_q}(A)$ lies in the center of $\End_K(A)$ and $\End_{\F_q}(A)=\End_K(A)$.
 Moreover the set $\bcS$ equals $\Tors(\Aut_{\F_q}(A))$.
 Proposition \ref{prop:coh_bij} and Corollary \ref{cor:coh_bij} yield the desired bijection.
\end{proof}
 Corollary \ref{cor:gal_coh_effective} allow us to identify which twist is induced by $\tau$ by means of the relation \eqref{cor:coh_bij:eq1}, as we show in Example \ref{ex:coh}.

\begin{example}
\label{ex:coh}
Consider the $5^4$-Weil polynomial 
\[ h_4=x^6 - 112 x^5 + 5872 x^4 - 184786 x^3 + 5872 \cdot 5^4 x^2 - 112\cdot 5^8 x + 5^{12}. \]
The corresponding isogeny class $\AV(h_4)$ can be attained as base field extension of $4$ primitive absolutely simple isogeny classes determined by the following $5$-Weil polynomials
\begin{align*}
h^{(1)}=x^6 + 4x^5 + 12x^4 + & 36x^3 + 60x^2 + 100x + 125,\\ 
h^{(2)}=x^6 - 4x^5 + 12x^4 - & 36x^3 + 60x^2 - 100x + 125, \\
h^{(3)}=x^6 - 4x^4 - & 2x^3 - 20x^2 + 125,\\ 
h^{(4)}=x^6 - 4x^4 + & 2x^3 - 20x^2 + 125.
\end{align*}
The isogeny classes $\AV(h^{(1)}), \AV(h^{(2)}) , \AV(h^{(3)})$ and $\AV(h^{(4)})$ contain $1$, $1$, $14$ and $14$ isomorphism classes of abelian varieties, respectively.
Each isogeny class $\AV(h^{(i)})$ contains a single abelian variety with $4$ distinct torsion automorphisms, which we will denote by $1,-1,\iota_i$ and $-\iota_i$, with orders $1,2,4,4$, respectively.
All the other $26$ isomorphism classes have only two torsion automorphisms, namely $1$ and $-1$.
We do not add a pedix to the automorphisms $1$ and $-1$ since all abelian varieties considered have only one automorphism of order $1$, the identity, and one automorphism or order $2$, so no confusion can arise.
In the following $30\times 30$ matrix in the entry $(i,j)$ we write $"\cdot"$ if the $i$-th  and $j$-th isomorphism classes are not $4$-twists and, otherwise, $1,-1,\iota_i$ or $-\iota_i$ for the torsion automorphism of the $i$-th abelian variety which induces the twist.
\setcounter{MaxMatrixCols}{30}
\begin{center}
\resizebox{\hsize}{!}{
$\left(
\begin{smallmatrix}
1&-1&\cdot&\cdot&\cdot&\cdot&\cdot&\cdot&\cdot&\cdot&\cdot&\cdot&\cdot&\cdot&\cdot&\iota_1&\cdot&\cdot&\cdot&\cdot&\cdot&\cdot&\cdot&\cdot&\cdot&\cdot&\cdot&\cdot&\cdot&-\iota_1\\
-1&1&\cdot&\cdot&\cdot&\cdot&\cdot&\cdot&\cdot&\cdot&\cdot&\cdot&\cdot&\cdot&\cdot&\iota_2&\cdot&\cdot&\cdot&\cdot&\cdot&\cdot&\cdot&\cdot&\cdot&\cdot&\cdot&\cdot&\cdot&-\iota_2\\
\cdot&\cdot&1&\cdot&\cdot&\cdot&\cdot&\cdot&\cdot&\cdot&\cdot&\cdot&\cdot&\cdot&\cdot&\cdot&-1&\cdot&\cdot&\cdot&\cdot&\cdot&\cdot&\cdot&\cdot&\cdot&\cdot&\cdot&\cdot&\cdot\\
\cdot&\cdot&\cdot&1&\cdot&\cdot&\cdot&\cdot&\cdot&\cdot&\cdot&\cdot&\cdot&\cdot&\cdot&\cdot&\cdot&-1&\cdot&\cdot&\cdot&\cdot&\cdot&\cdot&\cdot&\cdot&\cdot&\cdot&\cdot&\cdot\\
\cdot&\cdot&\cdot&\cdot&1&\cdot&\cdot&\cdot&\cdot&\cdot&\cdot&\cdot&\cdot&\cdot&\cdot&\cdot&\cdot&\cdot&-1&\cdot&\cdot&\cdot&\cdot&\cdot&\cdot&\cdot&\cdot&\cdot&\cdot&\cdot\\
\cdot&\cdot&\cdot&\cdot&\cdot&1&\cdot&\cdot&\cdot&\cdot&\cdot&\cdot&\cdot&\cdot&\cdot&\cdot&\cdot&\cdot&\cdot&-1&\cdot&\cdot&\cdot&\cdot&\cdot&\cdot&\cdot&\cdot&\cdot&\cdot\\
\cdot&\cdot&\cdot&\cdot&\cdot&\cdot&1&\cdot&\cdot&\cdot&\cdot&\cdot&\cdot&\cdot&\cdot&\cdot&\cdot&\cdot&\cdot&\cdot&-1&\cdot&\cdot&\cdot&\cdot&\cdot&\cdot&\cdot&\cdot&\cdot\\
\cdot&\cdot&\cdot&\cdot&\cdot&\cdot&\cdot&1&\cdot&\cdot&\cdot&\cdot&\cdot&\cdot&\cdot&\cdot&\cdot&\cdot&\cdot&\cdot&\cdot&-1&\cdot&\cdot&\cdot&\cdot&\cdot&\cdot&\cdot&\cdot\\
\cdot&\cdot&\cdot&\cdot&\cdot&\cdot&\cdot&\cdot&1&\cdot&\cdot&\cdot&\cdot&\cdot&\cdot&\cdot&\cdot&\cdot&\cdot&\cdot&\cdot&\cdot&-1&\cdot&\cdot&\cdot&\cdot&\cdot&\cdot&\cdot\\
\cdot&\cdot&\cdot&\cdot&\cdot&\cdot&\cdot&\cdot&\cdot&1&\cdot&\cdot&\cdot&\cdot&\cdot&\cdot&\cdot&\cdot&\cdot&\cdot&\cdot&\cdot&\cdot&-1&\cdot&\cdot&\cdot&\cdot&\cdot&\cdot\\
\cdot&\cdot&\cdot&\cdot&\cdot&\cdot&\cdot&\cdot&\cdot&\cdot&1&\cdot&\cdot&\cdot&\cdot&\cdot&\cdot&\cdot&\cdot&\cdot&\cdot&\cdot&\cdot&\cdot&-1&\cdot&\cdot&\cdot&\cdot&\cdot\\
\cdot&\cdot&\cdot&\cdot&\cdot&\cdot&\cdot&\cdot&\cdot&\cdot&\cdot&1&\cdot&\cdot&\cdot&\cdot&\cdot&\cdot&\cdot&\cdot&\cdot&\cdot&\cdot&\cdot&\cdot&-1&\cdot&\cdot&\cdot&\cdot\\
\cdot&\cdot&\cdot&\cdot&\cdot&\cdot&\cdot&\cdot&\cdot&\cdot&\cdot&\cdot&1&\cdot&\cdot&\cdot&\cdot&\cdot&\cdot&\cdot&\cdot&\cdot&\cdot&\cdot&\cdot&\cdot&-1&\cdot&\cdot&\cdot\\
\cdot&\cdot&\cdot&\cdot&\cdot&\cdot&\cdot&\cdot&\cdot&\cdot&\cdot&\cdot&\cdot&1&\cdot&\cdot&\cdot&\cdot&\cdot&\cdot&\cdot&\cdot&\cdot&\cdot&\cdot&\cdot&\cdot&-1&\cdot&\cdot\\
\cdot&\cdot&\cdot&\cdot&\cdot&\cdot&\cdot&\cdot&\cdot&\cdot&\cdot&\cdot&\cdot&\cdot&1&\cdot&\cdot&\cdot&\cdot&\cdot&\cdot&\cdot&\cdot&\cdot&\cdot&\cdot&\cdot&\cdot&-1&\cdot\\
\iota_3&-\iota_3&\cdot&\cdot&\cdot&\cdot&\cdot&\cdot&\cdot&\cdot&\cdot&\cdot&\cdot&\cdot&\cdot&1&\cdot&\cdot&\cdot&\cdot&\cdot&\cdot&\cdot&\cdot&\cdot&\cdot&\cdot&\cdot&\cdot&-1\\
\cdot&\cdot&-1&\cdot&\cdot&\cdot&\cdot&\cdot&\cdot&\cdot&\cdot&\cdot&\cdot&\cdot&\cdot&\cdot&1&\cdot&\cdot&\cdot&\cdot&\cdot&\cdot&\cdot&\cdot&\cdot&\cdot&\cdot&\cdot&\cdot\\
\cdot&\cdot&\cdot&-1&\cdot&\cdot&\cdot&\cdot&\cdot&\cdot&\cdot&\cdot&\cdot&\cdot&\cdot&\cdot&\cdot&1&\cdot&\cdot&\cdot&\cdot&\cdot&\cdot&\cdot&\cdot&\cdot&\cdot&\cdot&\cdot\\
\cdot&\cdot&\cdot&\cdot&-1&\cdot&\cdot&\cdot&\cdot&\cdot&\cdot&\cdot&\cdot&\cdot&\cdot&\cdot&\cdot&\cdot&1&\cdot&\cdot&\cdot&\cdot&\cdot&\cdot&\cdot&\cdot&\cdot&\cdot&\cdot\\
\cdot&\cdot&\cdot&\cdot&\cdot&-1&\cdot&\cdot&\cdot&\cdot&\cdot&\cdot&\cdot&\cdot&\cdot&\cdot&\cdot&\cdot&\cdot&1&\cdot&\cdot&\cdot&\cdot&\cdot&\cdot&\cdot&\cdot&\cdot&\cdot\\
\cdot&\cdot&\cdot&\cdot&\cdot&\cdot&-1&\cdot&\cdot&\cdot&\cdot&\cdot&\cdot&\cdot&\cdot&\cdot&\cdot&\cdot&\cdot&\cdot&1&\cdot&\cdot&\cdot&\cdot&\cdot&\cdot&\cdot&\cdot&\cdot\\
\cdot&\cdot&\cdot&\cdot&\cdot&\cdot&\cdot&-1&\cdot&\cdot&\cdot&\cdot&\cdot&\cdot&\cdot&\cdot&\cdot&\cdot&\cdot&\cdot&\cdot&1&\cdot&\cdot&\cdot&\cdot&\cdot&\cdot&\cdot&\cdot\\
\cdot&\cdot&\cdot&\cdot&\cdot&\cdot&\cdot&\cdot&-1&\cdot&\cdot&\cdot&\cdot&\cdot&\cdot&\cdot&\cdot&\cdot&\cdot&\cdot&\cdot&\cdot&1&\cdot&\cdot&\cdot&\cdot&\cdot&\cdot&\cdot\\
\cdot&\cdot&\cdot&\cdot&\cdot&\cdot&\cdot&\cdot&\cdot&-1&\cdot&\cdot&\cdot&\cdot&\cdot&\cdot&\cdot&\cdot&\cdot&\cdot&\cdot&\cdot&\cdot&1&\cdot&\cdot&\cdot&\cdot&\cdot&\cdot\\
\cdot&\cdot&\cdot&\cdot&\cdot&\cdot&\cdot&\cdot&\cdot&\cdot&-1&\cdot&\cdot&\cdot&\cdot&\cdot&\cdot&\cdot&\cdot&\cdot&\cdot&\cdot&\cdot&\cdot&1&\cdot&\cdot&\cdot&\cdot&\cdot\\
\cdot&\cdot&\cdot&\cdot&\cdot&\cdot&\cdot&\cdot&\cdot&\cdot&\cdot&-1&\cdot&\cdot&\cdot&\cdot&\cdot&\cdot&\cdot&\cdot&\cdot&\cdot&\cdot&\cdot&\cdot&1&\cdot&\cdot&\cdot&\cdot\\
\cdot&\cdot&\cdot&\cdot&\cdot&\cdot&\cdot&\cdot&\cdot&\cdot&\cdot&\cdot&-1&\cdot&\cdot&\cdot&\cdot&\cdot&\cdot&\cdot&\cdot&\cdot&\cdot&\cdot&\cdot&\cdot&1&\cdot&\cdot&\cdot\\
\cdot&\cdot&\cdot&\cdot&\cdot&\cdot&\cdot&\cdot&\cdot&\cdot&\cdot&\cdot&\cdot&-1&\cdot&\cdot&\cdot&\cdot&\cdot&\cdot&\cdot&\cdot&\cdot&\cdot&\cdot&\cdot&\cdot&1&\cdot&\cdot\\
\cdot&\cdot&\cdot&\cdot&\cdot&\cdot&\cdot&\cdot&\cdot&\cdot&\cdot&\cdot&\cdot&\cdot&-1&\cdot&\cdot&\cdot&\cdot&\cdot&\cdot&\cdot&\cdot&\cdot&\cdot&\cdot&\cdot&\cdot&1&\cdot\\
-\iota_4&\iota_4&\cdot&\cdot&\cdot&\cdot&\cdot&\cdot&\cdot&\cdot&\cdot&\cdot&\cdot&\cdot&\cdot&-1&\cdot&\cdot&\cdot&\cdot&\cdot&\cdot&\cdot&\cdot&\cdot&\cdot&\cdot&\cdot&\cdot&1
\end{smallmatrix}\right)
$
}
\end{center}
\end{example}
\section{Field of definition}
\label{sec:field_def}
Let $A$ be an abelian variety over $\F_q$ and $k$ be a subfield of $\F_q$.
We say that $A$ is \emph{defined} over $k$ if there exists an abelian variety $A'$ over $k$ such that $A'\otimes_k\F_q$ is isomorphic to $A$ over $\F_q$.
We say that an abelian variety $A$ over $\F_q$ is \emph{primitive} if there is no proper subfield $k$ of $\F_q$ such that $A$ is defined over $k$.
Moreover, for a $q$-Weil polynomial $h$, we say that isogeny class $\AV(h)$ is \emph{primitive} if every abelian variety $A$ in $\AV(h)$ is so. 

Let $h$ be an irreducible ordinary $q$-Weil polynomial.
By looking at the complex roots of $h$ it is easy to list all $q_0$-Weil polynomials $h_0$, with $\F_{q_0}\subseteq \F_q$, that give $h$ after a base field extension to $\F_q$.
Observe that each such $h_0$ is irreducible and for any subfield $\F_{q_0}\subseteq \F_q$ there might be more than one $q_0$-Weil polynomial that extends to $h$.
\begin{example}
\label{ex:ec_extension}
The isogeny class of elliptic curves over $\F_{16}$ determined by $h_{16}=x^2-x+16$ is not primitive and can be attained as a base extension of the primitive isogeny classes $h_4=x^2-3x+4$ over $\F_4$ and of both $h_{2,1}=x^2+x+2$ and $h_{2,2}=x^2-x+2$ over $\F_2$.
\end{example}

\begin{cor}
\label{cor:field_def}
Let $h$ be an irreducible ordinary $q$-Weil polynomial.
Let $t$ be some positive integer.
For $A$ in $\AV(h^t)$, consider the $R_h$-module $M=\cF_{h^t}(A)$ in $\BassMod{h}{t}$.
Let $h_0$ be a $q_0$-Weil polynomial that extends to $h$.
The abelian variety $A$ can be defined over $\F_{q_0}$ if and only if $M$ is an $R_{h_0}$-module, that is, there exists $M_0$ in $\BassMod{h_0}{t}$ such that $\cE_2(M_0)\simeq M$.
\end{cor}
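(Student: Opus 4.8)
The plan is to derive the corollary directly from Corollary~\ref{cor:functors}: once the relation between the fields and polynomials is set up, the categorical content is a short diagram chase exploiting that the functors $\cF$ are equivalences of categories.

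First I would put $r=[\F_q:\F_{q_0}]$, so $q=q_0^r$. Since $h_0$ is irreducible and base-extends to the irreducible polynomial $h$, the computation in the proof of Proposition~\ref{prop:charpolyext} (applied to $h_0$ over $\F_{q_0}$ with extension degree $r$) gives $(h_0)_r=g^s$ with $g$ irreducible, and comparing with $(h_0)_r=h$ forces $g=h$ and $s=1$. Thus we are exactly in the setting of Corollary~\ref{cor:functors} with base field $\F_{q_0}$, irreducible polynomial $h_0$, and exponent $t$: it produces a commutative square whose top edge is $\cF_{h_0^t}\colon\AV(h_0^t)\to\BassMod{h_0}{t}$, bottom edge is $\cF_{h^t}\colon\AV(h^t)\to\BassMod{h}{t}$, left edge is $-\otimes\F_q$, and right edge is $\cE_2\colon\BassMod{h_0}{t}\to\BassMod{h}{t}$. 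Because $h_0$ and $h$ are irreducible, hence squarefree, $\cF_{h_0^t}$ and $\cF_{h^t}$ are equivalences by Theorems~\ref{thm:deligne} and~\ref{thm:powers}; in particular they are essentially surjective and reflect isomorphisms between objects.

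For the direction ``$M$ is an $R_{h_0}$-module $\Rightarrow$ $A$ is defined over $\F_{q_0}$'', I would take $M_0\in\BassMod{h_0}{t}$ with $\cE_2(M_0)\simeq M$, lift $M_0$ along the equivalence $\cF_{h_0^t}$ to an abelian variety $A_0$ over $\F_{q_0}$, and read off from the commutative square that $\cF_{h^t}(A_0\otimes\F_q)\simeq\cE_2(M_0)\simeq M\simeq\cF_{h^t}(A)$; since $\cF_{h^t}$ reflects isomorphisms we get $A_0\otimes\F_q\simeq_{\F_q}A$, so $A$ is defined over $\F_{q_0}$. For the converse I would start from an abelian variety $A_0$ over $\F_{q_0}$ with $A_0\otimes\F_q\simeq_{\F_q}A$, observe that $A_0$ is ordinary because the $p$-rank is a geometric invariant, so that $A_0\in\AV(h_{A_0})$ with $(h_{A_0})_r=h^t$, arrange that $A_0$ lies in $\AV(h_0^t)$, set $M_0:=\cF_{h_0^t}(A_0)\in\BassMod{h_0}{t}$, and conclude from the square that $\cE_2(M_0)=\cF_{h^t}(A_0\otimes\F_q)\simeq\cF_{h^t}(A)=M$.

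The only real obstacle is the step ``$A_0$ lies in $\AV(h_0^t)$'' in the converse: an $\F_{q_0}$-model of $A$ could a priori belong to another isogeny class whose base extension to $\F_q$ is again $\AV(h^t)$, because non-isogenous abelian varieties over $\F_{q_0}$ may become isogenous over $\F_q$ (as in the example opening Section~\ref{sec:uptoisogeny}). To handle this I would use that $A$ is isotypic and ordinary, so that each simple isogeny factor $C$ of any $\F_{q_0}$-model $A_0$ is ordinary, hence has an irreducible Frobenius characteristic polynomial, and that $(h_{A_0})_r=h^t$ with $h$ irreducible forces every such polynomial to extend to a power $h^m$ of $h$; letting $h_0$ run over the finitely many, explicitly listable, $q_0$-Weil polynomials that extend to $h$ --- and, for a criterion covering every model and not only ``pure power'' ones, also over the irreducible $q_0$-Weil polynomials extending to higher powers of $h$, with the matching exponents --- then accounts for all $\F_{q_0}$-models, and the corollary is to be read accordingly. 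Everything else is the formal diagram chase above.
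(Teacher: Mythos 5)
Your proof takes the same route as the paper --- deducing the claim from Corollary~\ref{cor:functors} --- but spells out all the details that the paper compresses into a single sentence. The ``if'' direction (lift $M_0$ along the equivalence $\cF_{h_0^t}$, push around the commutative square, and use that $\cF_{h^t}$ reflects isomorphisms) and the reduction $(h_0)_r=h$, $s=1$ via Proposition~\ref{prop:charpolyext} are exactly what the paper has in mind.

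The subtlety you raise in the ``only if'' direction is genuine and is not addressed by the paper's one-line proof. When $t>1$, an $\F_{q_0}$-model $A_0$ of $A$ need not be isotypic over $\F_{q_0}$: it could decompose as a product of non-$\F_{q_0}$-isogenous simple factors whose Weil polynomials each extend to (possibly distinct powers of) $h$, so $A_0$ need not lie in $\AV(h_0^t)$ for a single irreducible $h_0$. In that case the Frobenius $\pi_0$ of $A_0$ lives in a product of fields rather than in $K_h$, and ``$M$ is an $R_{h_0}$-module'' for a fixed $h_0$ is not the right condition. When $t=1$ your argument shows the issue cannot arise ($A_0$ must be simple with an irreducible polynomial extending to $h$), and the paper's explicit $t=2$ examples (Examples~\ref{ex:Bass2} and~\ref{ex:notBass2}) each check separately that the relevant $h_0$ is unique, which sidesteps the problem. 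Your suggested reading --- let $h_0$ range over all irreducible $q_0$-Weil polynomials whose extension is a power of $h$, with matching exponents --- is the correct clarification of the statement in full generality.
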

\begin{proof}
This is a consequence of Corollary \ref{cor:functors}.
\end{proof}

\begin{example}
\label{ex:ec_extension2}
Using the same notation as in Example \ref{ex:ec_extension}, we fix isomorphisms between $K_{h_{16}}$, $K_{h_4}$, $K_{h_{2,1}}$ and $K_{h_{2,2}}$ so that we can work in $K_{h_{16}}$, which we will denote by $K$.

The order $R_{h_{16}}$ has index $3$ in the maximal order $\cO_K$.
Moreover the images in $K$ of the orders $R_{h_4}$, $R_{h_{2,1}}$ and $R_{h_{2,2}}$ all equal the maximal order $\cO_K$.
Since the Picard group of $R_{h_{16}}$ has order $4$ and $\cO_K$ is a principal ideal domain, we see that there are $5$ isomorphism classes of elliptic curves in $\AV(h_{16})$.
The first $4$ have endomorphism ring isomorphic to $R_{h_{16}}$ and so by Corollary \ref{cor:field_def} cannot be defined over any  proper subfield of~$\F_{16}$.
On the other hand, the unique isomorphism class with maximal endomorphism ring can be defined over $\F_4$ or over $\F_2$.
It is not hard to determine equations of the representatives of these classes.
Write $\F_{16}=\F_2(T)$ and $\F_4=\F_2(S)$, for $T^4+T+1 = 0$ and $S^2+S+1=0$.
Consider the elliptic curves
\begin{align*}
   & E_{16,i} : y^2 + xy = x^3 + T^{2i} \in \AV(h_{16}), \qquad \text{for }i=0,1,2,3,4,\\
   & E_{2,1} : y^2 + xy = x^3 + 1 \in \AV(h_{2,1}),\\
   & E_{2,2} : y^2 + xy + y = x^3 + 1 \in \AV(h_{2,2}),\\
   & E_4 : y^2 + xy + Sy = x^3 + S \in \AV(h_4).
\end{align*}
We have that $E_{16,0}$ is isomorphic to $E_{2,1}\otimes_{\F_2}\F_{16}$, $E_{2,2}\otimes_{\F_2}\F_{16}$ and $E_4\otimes_{\F_4}\F_{16}$. We deduce that $E_{16,0}$ has maximal endomorphism ring, while $E_{16,i}$ for $i=1,\ldots,4$ represent the isomorphism classes with endomorphism ring isomorphic to $R_{16}$.

\end{example}

\begin{example}
\label{ex:Bass2}
Consider the situation of Example \ref{ex:Bass}.
From the computations described, we see that the $6$ isomorphism classes of abelian varieties in $\AV(h_2)$ are extensions of abelian varieties from $\AV(h)$, that is, they can all be defined over $\F_{103}$.
\end{example}

\begin{example}
\label{ex:notBass2}
Consider Example \ref{ex:notBass}.
Here we see that not all isomorphism classes in $\AV(h_2)$ are extensions.
Indeed the varieties corresponding to the modules $M_1$ and $M_4$ cannot be defined over the prime field $\F_{13}$.
\end{example}

\bibliographystyle{amsalpha}

\begin{thebibliography}{JKP{\etalchar{+}}18}

\bibitem[Bas63]{basshy63}
Hyman Bass, \emph{On the ubiquity of {G}orenstein rings}, Math. Z. \textbf{82}
  (1963), 8--28.

\bibitem[BF65]{BF65}
Z.~I. Borevi\v{c} and D.~K. Faddeev, \emph{{Representations of orders with a
  cyclic index.}}, {Proc. Steklov Inst. Math. 80 (1965), 56-72 (1968);
  translation from Tr. Mat. Inst. Steklov 80, 51-65 (1965).}, 1965.

\bibitem[BS66]{borevshaf66}
A.~I. Borevich and I.~R. Shafarevich, \emph{Number theory}, Translated from the
  Russian by Newcomb Greenleaf. Pure and Applied Mathematics, Vol. 20, Academic
  Press, New York-London, 1966.

\bibitem[CCO14]{chaiconradoort14}
Ching-Li Chai, Brian Conrad, and Frans Oort, \emph{Complex multiplication and
  lifting problems}, Mathematical Surveys and Monographs, vol. 195, American
  Mathematical Society, Providence, RI, 2014.

\bibitem[CS15]{CentelegheStix15}
Tommaso~Giorgio Centeleghe and Jakob Stix, \emph{Categories of abelian
  varieties over finite fields, {I}: {A}belian varieties over
  {$\mathbb{F}_p$}}, Algebra Number Theory \textbf{9} (2015), no.~1, 225--265.

\bibitem[Del69]{Del69}
Pierre Deligne, \emph{Vari\'et\'es ab\'eliennes ordinaires sur un corps fini},
  Invent. Math. \textbf{8} (1969), 238--243.

\bibitem[DTZ62]{dadetz62}
E.~C. Dade, O.~Taussky, and H.~Zassenhaus, \emph{On the theory of orders, in
  particular on the semigroup of ideal classes and genera of an order in an
  algebraic number field}, Math. Ann. \textbf{148} (1962), 31--64.

\bibitem[EHO19]{EickHofmannOBrien19}
Bettina Eick, Tommy Hofmann, and Eamonn O'Brien, \emph{The conjugacy problem in
  {${\bf GL}(n,{\bf Z})$}}, J. Lond. Math. Soc. (2) \textbf{00} (2019), 1--26.

\bibitem[Hon68]{Honda68}
Taira Honda, \emph{Isogeny classes of abelian varieties over finite fields}, J.
  Math. Soc. Japan \textbf{20} (1968), 83--95.

\bibitem[How95]{Howe95}
Everett~W. Howe, \emph{Principally polarized ordinary abelian varieties over
  finite fields}, Trans. Amer. Math. Soc. \textbf{347} (1995), no.~7,
  2361--2401.
\bibitem[Hus16]{HusertPhDThesis17}
David Husert, \emph{Similarity of integer matrices}, University of Paderborn,
  2016, PhD Thesis.

\bibitem[JKP{\etalchar{+}}18]{JKPRSBT18}
Bruce~W. Jordan, Allan~G. Keeton, Bjorn Poonen, Eric~M. Rains, Nicholas
  Shepherd-Barron, and John~T. Tate, \emph{Abelian varieties isogenous to a
  power of an elliptic curve}, Compos. Math. \textbf{154} (2018), no.~5,
  934--959.

\bibitem[Kan11]{Kani11}
Ernst Kani, \emph{Products of {CM} elliptic curves}, Collect. Math. \textbf{62}
  (2011), no.~3, 297--339.

\bibitem[KP19]{KaremakerPries19}
Valentijn Karemaker and Rachel Pries, \emph{Fully maximal and fully minimal
  abelian varieties}, J. Pure Appl. Algebra \textbf{223} (2019), no.~7,
  3031--3056.

\bibitem[Lau02]{Lauter02}
Kristin Lauter, \emph{The maximum or minimum number of rational points on genus
  three curves over finite fields}, Compositio Math. \textbf{134} (2002),
  no.~1, 87--111, With an appendix by Jean-Pierre Serre.

\bibitem[LM33]{LaClMD33}
Claiborne~G. Latimer and C.~C. MacDuffee, \emph{A correspondence between
  classes of ideals and classes of matrices}, Ann. of Math. (2) \textbf{34}
  (1933), no.~2, 313--316.

\bibitem[LW85]{LevyWiegand85}
Lawrence~S. Levy and Roger Wiegand, \emph{Dedekind-like behavior of rings with
  {$2$}-generated ideals}, J. Pure Appl. Algebra \textbf{37} (1985), no.~1,
  41--58.

\bibitem[Mar]{MarICM18}
Stefano Marseglia, \emph{Computing the ideal class monoid of an order}, Journal
  of the London Mathematical Society.

\bibitem[Mar18]{MarAbVar18}
\bysame, \emph{Computing isomorphism classes of square-free polarized abelian
  varieties over finite fields}, arXiv:1805.10223, 2018.

\bibitem[Mar19]{MarBassPow}
\bysame, \emph{Computing abelian varieties over finite fields isogenous to a
  power}, Res. number theory \textbf{5} (2019), no.~4, 35.

\bibitem[MT10]{MeagherTop10}
Stephen Meagher and Jaap Top, \emph{Twists of genus three curves over finite
  fields}, Finite Fields Appl. \textbf{16} (2010), no.~5, 347--368.

\bibitem[OS]{OswalShankarEarlyView}
Abhishek Oswal and Ananth~N. Shankar, \emph{Almost ordinary abelian varieties
  over finite fields}, Journal of the London Mathematical Society.

\bibitem[Ser02]{SerreGaloisCohomology}
Jean-Pierre Serre, \emph{Galois cohomology}, english ed., Springer Monographs
  in Mathematics, Springer-Verlag, Berlin, 2002, Translated from the French by
  Patrick Ion and revised by the author.

\bibitem[Sti09]{Stichtenoth09}
Henning Stichtenoth, \emph{Algebraic function fields and codes}, second ed.,
  Graduate Texts in Mathematics, vol. 254, Springer-Verlag, Berlin, 2009.


\bibitem[Tat66]{Tate66}
John Tate, \emph{Endomorphisms of abelian varieties over finite fields},
  Invent. Math. \textbf{2} (1966), 134--144.

\bibitem[Tat71]{Tate71}
\bysame, \emph{Classes d'isog\'{e}nie des vari\'{e}t\'{e}s ab\'{e}liennes sur
  un corps fini (d'apr\`es {T}. {H}onda)}, S\'{e}minaire {B}ourbaki. {V}ol.
  1968/69: {E}xpos\'{e}s 347--363, Lecture Notes in Math., vol. 175, Springer,
  Berlin, 1971, pp.~Exp. No. 352, 95--110.

\bibitem[Vog19]{Vogt19}
Isabel Vogt, \emph{Abelian varieties isogenous to a power of an elliptic curve
  over a {G}alois extension}, J. Th\'{e}or. Nombres Bordeaux \textbf{31}
  (2019), no.~1, 205--213.
\end{thebibliography}
\newcommand{\etalchar}[1]{$^{#1}$}
\providecommand{\bysame}{\leavevmode\hbox to3em{\hrulefill}\thinspace}

\end{document}